\newtheorem{sat}{Theorem}[section]		
\newtheorem{lem}[sat]{Lemma}
\newtheorem*{defi*}{Definition}			
\newtheorem*{bei*}{Example}
\newtheorem*{sat*}{Theorem}				
\newtheorem*{kor*}{Corollary}
\newtheorem*{rmk*}{Remark}				
\newtheorem*{quest*}{Question}	
\newtheorem*{claim}{Claim}	
\let\ssection=\section
\renewcommand{\section}{\setcounter{equation}{0}\ssection}
\newtheorem*{namedtheorem}{\theoremname}
\newcommand{\theoremname}{testing}
\newenvironment{named}[1]{\renewcommand{\theoremname}{#1}\begin{namedtheorem}}{\end{namedtheorem}}
\theoremstyle{remark}
\newtheorem*{bem}{Remark}
\newtheorem*{namedtheoremr}{\theoremnamer}
\newcommand{\theoremnamer}{testing}
\newcommand{\BR}{\mathbb R}			
			\newcommand{\BZ}{\mathbb Z}
\newcommand{\CA}{\mathcal A}		
\newcommand{\CC}{\mathcal C}		
\newcommand{\CE}{\mathcal E}		
\newcommand{\CG}{\mathcal G}		
\newcommand{\CK}{\mathcal K}		\newcommand{\CL}{\mathcal L}
\newcommand{\CM}{\mathcal M}		
		\newcommand{\CP}{\mathcal P}
\newcommand{\CS}{\mathcal S}		
\newcommand{\CU}{\mathcal U}
\newcommand{\FM}{\mathfrak m}
\newcommand{\FN}{\mathfrak n}
\newcommand{\FB}{\mathfrak b}
\newcommand{\D}{\partial}
\DeclareMathOperator{\Id}{Id}		
\newcommand{\comment}[1]{}
\DeclareMathOperator{\Lip}{Lip}
\DeclareMathOperator{\Thu}{Thu}
\newcommand{\fsubd}{\mathrel{{\scriptstyle\searrow}\kern-1ex^d\kern0.5ex}}
\newcommand{\bsubd}{\mathrel{{\scriptstyle\swarrow}\kern-1.6ex^d\kern0.8ex}}
\renewcommand{\epsilon}{\varepsilon}
\renewcommand{\le}{\leqslant}
\renewcommand{\ge}{\geqslant}
\begin{document}

\title[]{Distribution in the unit tangent bundle of the geodesics of given type}
  \author{Viveka Erlandsson}
  \address{School of Mathematics, University of Bristol \\ Bristol BS8 1UG, UK {\rm and}  \newline ${ }$ \hspace{0.2cm} Department of Mathematics and Statistics, UiT The Arctic University of  \newline ${ }$ \hspace{0.2cm} Norway}
  \email{v.erlandsson@bristol.ac.uk}
\thanks{The first author gratefully acknowledges support from EPSRC grant EP/T015926/1.}
\author{Juan Souto}
\address{UNIV RENNES, CNRS, IRMAR - UMR 6625, F-35000 RENNES, FRANCE}
\email{jsoutoc@gmail.com}

\begin{abstract}
Recall that two geodesics in a negatively curved surface $S$ are of the same type if their free homotopy classes differ by a homeomorphism of the surface. In this note we study the distribution in the unit tangent bundle of the geodesics of fixed type, proving that they are asymptotically equidistributed with respect to a certain measure $\FM^S$ on $T^1S$. We study a few properties of this measure, showing for example that it distinguishes between hyperbolic surfaces.
\end{abstract}
\dedicatory{Dedicated to Scott Wolpert.}

\maketitle

\section{}

Let $S$ be a closed, orientable, connected surface of genus $g$ endowed with a negatively curved metric and let $\rho^S=(\rho_t^S)_{t\in\BR}$ be the associated geodesic flow on the unit tangent bundle $T^1S$. We can associate to every primitive periodic $\rho^S$-orbit $\gamma$, that is to every non-trivial closed primitive oriented geodesic, a measure $\vec\gamma$ on $T^1S$ as follows: Choose $v\in T^1S$ tangent to $\gamma$, let $\ell_S(\gamma)$ be the length of $\gamma$, that is the period of the orbit $\gamma$, and set
\begin{equation}\label{eq:bla}
  \int_{T^1S} f\, d\vec\gamma\stackrel{\text{def}}=\int_0^{\ell_S(\gamma)}f(\rho_t(v))\, dt
\end{equation}
for all $f\in C^0(T^1S)$. In \cite{Bowen72} Bowen investigated (for general hyperbolic flows) the distribution of the set $\CP_L(S)$ of all primitive periodic $\rho^S$-orbits with period at most $L$. To do so he studied the behavior, when $L$ tends to $\infty$, of the measures
\begin{equation}\label{eq:bowen}
  b^S_L=\sum_{\gamma\in\CP_L(S)}\vec\gamma
\end{equation}
and proved that the associated probability measures converge to the measure of maximal entropy of $\rho^S$:
$$\FB^S=\lim_{L\to\infty}\frac 1{\Vert b^S_L\Vert}b^S_L.$$

In this note we study what happens if we condition Bowen's construction to the set of periodic orbits of a give type. Here we say, \`a la Mirzakhani, that two closed geodesics are {\em of the same type} if their unoriented free homotopy classes differ by a homeomorphism of the surface---for example, any two non-separating simple curves are of the same type. Now, if $\gamma_0$ is a closed primitive geodesic in $S$ we consider the asymptotic behaviour for $L\to\infty$ of the measures
\begin{equation}\label{eq:bowen2}
  m_L^{S,\gamma_0}=\sum_{\gamma\in\CP_L(S,\gamma_0)}\vec\gamma
\end{equation}
where $\CP_L(S,\gamma_0)$ is the set of all primitive periodic $\rho^S$-orbits of type $\gamma_0$ and with at most length $L$. We prove:

\begin{sat}\label{main}
  Let $S$ be a closed orientable surface of genus $g$ endowed with a negatively curved metric. There is a measure $\FM^S$ on $T^1S$, invariant under both the geodesic flow and the geodesic flip, and with
  $$\lim_{L\to\infty}\frac 1{\Vert m_L^{S,\gamma_0}\Vert}m_L^{S,\gamma_0}=\FM^S$$
  for every non-trivial closed primitive geodesic $\gamma_0$ in $S$.
\end{sat}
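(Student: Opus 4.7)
The strategy is to reformulate the problem in terms of geodesic currents and to apply the counting theorem for mapping-class-group orbits of currents on negatively curved surfaces (the authors' extension of Mirzakhani's theorem). Recall that to every geodesic current $\lambda$ on $S$ one can associate a measure $\widehat\lambda$ on $T^1S$, invariant under both the geodesic flow and the geodesic flip: lift $\lambda$ to a $\pi_1(S)$-invariant Radon measure on the space of unoriented geodesics of $\tilde S$, take the product with Lebesgue measure along geodesics, symmetrize in orientation, and push down to $T^1S$. The resulting map $\Phi\colon\CC(S)\to\CM(T^1S)$, $\lambda\mapsto\widehat\lambda$, is continuous, linear, and positively $1$-homogeneous, and for a primitive closed geodesic $\gamma$ it satisfies $\Phi(\gamma) = \vec\gamma + \vec{\gamma^{-1}}$.

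Since the notion of type is defined on unoriented free homotopy classes, each unoriented closed geodesic of type $\gamma_0$ and length at most $L$ contributes both of its orientations to $\CP_L(S,\gamma_0)$, so one can rewrite
$$m_L^{S,\gamma_0} \;=\; \int_{\CC(S)} \Phi(\lambda)\,d\nu_L^{\gamma_0}(\lambda),$$
where $\nu_L^{\gamma_0}=\sum \delta_\lambda$ is the counting measure on the $\Map(S)$-orbit of $\gamma_0$ restricted to $\{\ell_S\le L\}$. Using $\Phi(\lambda)=L\,\Phi(\lambda/L)$ and changing variables via $\lambda\mapsto\lambda/L$ one obtains
$$\frac{1}{L^{6g-5}}\,m_L^{S,\gamma_0} \;=\; \int_{\{\lambda\in\CC(S)\,:\,\ell_S(\lambda)\le 1\}} \Phi(\lambda)\,d\widetilde\nu_L(\lambda),$$
where $\widetilde\nu_L$ is the pushforward of $L^{-6g+6}\nu_L^{\gamma_0}$ under $\lambda\mapsto\lambda/L$.

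The counting theorem for negatively curved metrics asserts that $\widetilde\nu_L$ converges weakly, on the compact set $\{\ell_S\le 1\}\subset\CC(S)$, to $c(\gamma_0)\cdot\mu^S$, where $c(\gamma_0)>0$ depends only on the topological type of $\gamma_0$ and $\mu^S$ is a measure supported on measured laminations and depending only on the Riemannian structure of $S$. Since $\Phi$ is continuous and the domain of integration is compact, passing to the limit yields
$$\lim_{L\to\infty}\frac{1}{L^{6g-5}}\,m_L^{S,\gamma_0} \;=\; c(\gamma_0)\cdot\FM^S, \qquad \FM^S := \int_{\{\ell_S\le 1\}} \Phi(\lambda)\,d\mu^S(\lambda).$$
The positive factor $c(\gamma_0)$ cancels upon normalization by total mass, so $m_L^{S,\gamma_0}/\|m_L^{S,\gamma_0}\|$ converges to $\FM^S/\|\FM^S\|$, independently of $\gamma_0$. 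Invariance of $\FM^S$ under both the geodesic flow and the geodesic flip is inherited from the corresponding invariance of each $\Phi(\lambda)$.

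The main obstacle I expect is to set up and justify the two ingredients cleanly: (i) the continuity of $\lambda\mapsto\widehat\lambda$ as a map from currents (with the weak-$*$ topology) into positive Radon measures on $T^1S$, which amounts to a careful local description of the ``transverse-measure $\times$ arclength'' construction on small flow boxes; and (ii) quoting the counting theorem in the form needed for variable negative curvature, not just for hyperbolic metrics. Once these are in place, the convergence of the rescaled $m_L^{S,\gamma_0}$ reduces to an exchange of limit and integral on a compact domain. Finally, one should verify that $\FM^S\ne 0$, which holds because $\Phi(\lambda)$ is positive on an open subset of $T^1S$ for every nonzero $\lambda$ in the support of $\mu^S$ and $\mu^S$ itself has positive mass.
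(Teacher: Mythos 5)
Your proof is correct and follows essentially the same route as the paper: both pass through the correspondence between currents and flip- and flow-invariant measures on $T^1S$, rescale by $L^{6g-5}$ using homogeneity, and invoke the convergence $\frac{1}{L^{6g-6}}\sum_{\gamma\sim\gamma_0}\delta_{\widehat\gamma/L}\to C(\gamma_0)\cdot\FM_{\Thu}$ integrated over the compact set $\{\ell_S\le 1\}$. The paper merely organizes the same computation into two lemmas (total mass, then pairing with a test function), whereas you phrase it as a single vector-valued integral.
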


The measures $b_L^S$ and $m_L^{S,\gamma_0}$ are very different. For instance, the total measure of the former grows exponentially when $L\to\infty$ while, as we will see in Lemma \ref{lem:total measure}, the latter has total measure asymptotic to a multiple of $L^{6g-5}$. Still, since by Bowen's theorem the set of all geodesics accumulates to the measure of maximal entropy $\FB^S$, and since the measure $\FM^S$ in Theorem \ref{main} is independent of the type of geodesic under consideration, it could maybe sound reasonable to conjecture that $\FM^S$ is once again the measure of maximal entropy. This is definitively not the case. For example, the measure of maximal entropy of the geodesic flow is ergodic, has positive entropy, and has full support in $T^1S$. None of this is true for the measure $\FM^S$:

\begin{sat}\label{properties of measure}
The measure $\FM^S$ is not ergodic, has vanishing entropy, and its support has Hausdorff dimension $1$. 
\end{sat}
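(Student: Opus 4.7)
The plan is to establish all three properties by first giving an explicit description of the support of $\FM^S$ and then deducing each conclusion from that description. By construction, each measure $m_L^{S,\gamma_0}$ is supported on the union of axes of closed geodesics of type $\gamma_0$ of length at most $L$. Since the self-intersection number $k=i(\gamma_0,\gamma_0)$ is a topological invariant of the type, every such geodesic has exactly $k$ transverse self-intersections. Hence $\mathrm{supp}(\FM^S)$ is contained in $\CG_k$, the set of unit tangent vectors to complete geodesics in $S$ with at most $k$ self-intersections.

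For the Hausdorff dimension statement, I would appeal to the classical theorem of Birman--Series, which asserts that $\CG_0$---the set of tangent vectors to complete simple geodesics---has Hausdorff dimension $1$ in $T^1S$. Their proof adapts essentially verbatim to $\CG_k$ for any fixed $k$: a geodesic with self-intersection number $k$ is a concatenation of at most $k+1$ simple segments, and the corresponding combinatorial covering count still grows only polynomially in the inverse scale. This gives $\dim_H \mathrm{supp}(\FM^S)\le 1$. The reverse inequality is immediate because $\mathrm{supp}(\FM^S)$ is non-empty and flow-invariant, so it contains at least one full flow orbit, and hence has positive one-dimensional Hausdorff measure.

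The vanishing entropy follows from the dimension bound. Indeed $\mathrm{supp}(\FM^S)$ is foliated by flow orbits, and since its Hausdorff dimension is $1$ its intersection with any local transversal to $\rho^S$ is zero-dimensional. Consequently the conditional measures of $\FM^S$ along unstable manifolds of the geodesic flow are supported on zero-dimensional sets, so by the Ledrappier--Young entropy formula (or, elementarily, by observing that the number of Bowen $(T,\epsilon)$-balls needed to cover $\mathrm{supp}(\FM^S)$ grows only polynomially in $T$), the metric entropy of $\FM^S$ vanishes.

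Finally, for non-ergodicity I plan to exhibit $\FM^S$ as an integral of mutually singular flow-invariant measures, using the Mirzakhani-style equidistribution of closed geodesics of type $\gamma_0$ that presumably underpins Theorem \ref{main}. Each ergodic measured geodesic lamination $\lambda$ on $S$ carries a natural flow-invariant measure $\vec\lambda$ supported on its lift to $T^1S$, and distinct ergodic laminations produce mutually singular such measures. The equidistribution result should express $\FM^S=\int \vec\lambda\, d\mu(\lambda)$ against a Thurston-type measure $\mu$ on the space of measured laminations, which charges uncountably many distinct laminations. The main obstacle here is making this disintegration rigorous enough to read off two disjoint flow-invariant Borel sets of positive $\FM^S$-measure; once that is accomplished, non-ergodicity follows immediately.
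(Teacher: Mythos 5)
Your plan is essentially sound for the Hausdorff dimension and entropy claims, and it parallels the paper's argument, but two points deserve attention.

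First, your characterization of the support via $\CG_k$ is correct but weaker than necessary, and it slightly obscures what is really going on. The measure $\FM^S$ is (up to normalization) the integral $\int_{\{\ell_S(\cdot)\le 1\}}\overline\lambda\,d\FM_{\Thu}(\lambda)$ over \emph{measured laminations}, whose supports consist of \emph{simple} geodesics. Although every approximating closed geodesic of type $\gamma_0$ has exactly $k=\iota(\gamma_0,\gamma_0)$ self-intersections, these self-intersections disappear in the limit: the support of $\FM^S$ is precisely $\CK$, the union of vectors tangent to leaves of measured laminations, which sits inside $\CG_0$. This must be so, since $\FM^S$ is independent of $\gamma_0$ while $k$ is not. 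With that identification, the dimension bound is Birman--Series (or Fathi's Theorem 3.1, which the paper cites because its dynamical formulation also immediately gives vanishing topological entropy for $\rho^S|_{\CK}$, hence vanishing metric entropy for every invariant measure supported there, including $\FM^S$). Your $\CG_k$ detour still works—Birman--Series do treat bounded intersection number—but it is a longer road to the same place.

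Second, the non-ergodicity argument is where you have a genuine, and explicitly acknowledged, gap: you say you would like to disintegrate $\FM^S$ against a Thurston-type measure and ``read off'' two disjoint positive-measure invariant sets, but admit you have not made this rigorous. The paper sidesteps the disintegration entirely. The key input is Masur's theorem that the uniquely ergodic laminations $\CU\CE$ have full Thurston measure. One then simply partitions the projectivization $P\CU\CE$ into two measurable pieces $A_1\sqcup A_2$ whose preimages in $\CM\CL(S)$ both have positive Thurston measure, and defines $\CA_i\subset T^1S$ to be the vectors tangent to leaves of laminations in $A_i$. Unique ergodicity is exactly what guarantees that distinct projective classes have disjoint supports, so $\CA_1$ and $\CA_2$ are disjoint, flow-invariant, and both have positive $\FM^S$-measure. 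Note also that your phrase ``each \emph{ergodic} measured geodesic lamination'' should be ``uniquely ergodic''; without unique ergodicity one cannot conclude that distinct laminations have disjoint lifts to $T^1S$, which is the heart of the disjointness you need.
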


Every homeomorphism $\phi:S\to S'$ between two closed negatively curved surfaces induces an orbit equivalence between the associated geodesic flows. We get then a homeomorphism $\phi_*$---see \eqref{eq-homeo2} below---between the spaces of geodesic flow invariant measures on $T^1S$ and $T^1S'$ respectively; moreover $\phi_*$ only depends on the homotopy class of $\phi$. Anyways, $\phi_*$ allows us to compare the measures $\FB^S$ and $\FB^{S'}$, and it has been conjectured that they are mutually singular unless $\phi_*$ is isotopic to a homothety, an isometry if both surfaces have the same area. In fact, in \cite{Otal90} Otal proved that this conjecture holds if one replaces the measure of maximal entropy by the Liouville measure. We prove that again from this point of view the measure $\FM^S$ behaves in a very different way than the measure of maximal entropy:

\begin{sat}\label{theorem 3}
If $\phi:S\to S'$ is a homeomorphism between closed negatively curved surfaces then the measures $\phi_*(\FM^S)$ and $\FM^{S'}$ are in the same measure class.
\end{sat}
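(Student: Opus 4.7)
The plan is to prove the stronger statement that $\phi_*\FM^S$ and $\FM^{S'}$ are \emph{comparable} as positive measures, i.e.\ $C_1\FM^{S'}\le\phi_*\FM^S\le C_2\FM^{S'}$ for positive constants $C_1,C_2$; this immediately implies that they lie in the same measure class. The idea is to use the explicit limit description of $\FM^S$ and $\FM^{S'}$ from Theorem \ref{main} and to push the approximating measures $m_L^{S,\gamma_0}$ through $\phi_*$.

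Fix a primitive closed geodesic $\gamma_0\subset S$ and let $\gamma_0'\subset S'$ be the closed geodesic in the free homotopy class of $\phi(\gamma_0)$. Because the orbit equivalence underlying $\phi_*$ preserves periodic orbits and their type, it induces a bijection between $\CP_\infty(S,\gamma_0)$ and $\CP_\infty(S',\gamma_0')$; on each pair of corresponding periodic orbits, $\phi_*$ sends $\vec\gamma$ to a positive multiple $c(\gamma)\,\vec{\gamma'}$ of the arc-length measure on $\gamma'$, with $c(\gamma)$ controlled by the ratio $\ell_S(\gamma)/\ell_{S'}(\gamma')$ according to the normalization of $\phi_*$.

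Next comes the bi-Lipschitz comparison. Replacing $\phi$ and $\phi^{-1}$ by homotopic Lipschitz maps with common constant $K\ge 1$, one obtains $K^{-1}\ell_{S'}(\gamma')\le\ell_S(\gamma)\le K\ell_{S'}(\gamma')$ on every pair of corresponding closed geodesics, and in particular $c(\gamma)\in[c_-,c_+]$ for positive constants $c_\pm$. Comparing weights on each $\vec{\gamma'}$ between the two sides then yields the sandwich of positive measures on $T^1S'$,
\[
c_-\cdot m_{L/K}^{S',\gamma_0'}\;\le\;\phi_*\bigl(m_L^{S,\gamma_0}\bigr)\;\le\;c_+\cdot m_{KL}^{S',\gamma_0'},
\]
because if $\ell_{S'}(\gamma')\le L/K$ then $\ell_S(\gamma)\le L$ so the weight on the middle side is at least $c_-$, while if the middle weight is nonzero then $\ell_S(\gamma)\le L$ forces $\ell_{S'}(\gamma')\le KL$.

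Finally, divide through by $\Vert m_L^{S,\gamma_0}\Vert$ and pass to the limit $L\to\infty$. The middle quotient tends to $\phi_*\FM^S$ by linearity and continuity of $\phi_*$; the extreme quotients tend to positive multiples of $\FM^{S'}$, because the asymptotic $\Vert m_L^{\cdot,\cdot}\Vert\sim C\,L^{6g-5}$ from Lemma \ref{lem:total measure} makes the ratios $\Vert m_{L/K}^{S',\gamma_0'}\Vert/\Vert m_L^{S,\gamma_0}\Vert$ and $\Vert m_{KL}^{S',\gamma_0'}\Vert/\Vert m_L^{S,\gamma_0}\Vert$ approach finite positive limits, while by Theorem \ref{main} the renormalizations of $m_{L/K}^{S',\gamma_0'}$ and $m_{KL}^{S',\gamma_0'}$ both tend to $\FM^{S'}$. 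Since measure inequalities are preserved under weak-$*$ limits, one obtains $C_1\FM^{S'}\le\phi_*\FM^S\le C_2\FM^{S'}$, as desired. The only delicate point is pinning down the scaling $c(\gamma)$ coming from the definition of $\phi_*$; once that is fixed, the sandwich estimate and the limit passage are routine.
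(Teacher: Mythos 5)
Your argument is correct and takes a genuinely different route from the paper's. First, the worry you flag about the constant $c(\gamma)$ is actually moot: with the paper's definition of $\phi_*$ as the composition $\CM_{\text{flip-flow}}(S)\to\CC(S)\to\CC(S')\to\CM_{\text{flip-flow}}(S')$ of the homeomorphism \eqref{eq-homeo} on each side with the current map \eqref{eq-currents homeo} in the middle, one has exactly $\phi_*(\overline\gamma)=\overline{\phi(\gamma)}$ for every closed geodesic $\gamma$, so $c(\gamma)\equiv 1$ and your constants $c_\pm$ are both $1$. Linearity of $\phi_*$, the bijection $\gamma\mapsto\phi(\gamma)$ between geodesics of type $\gamma_0$ and those of type $\gamma_0'=\phi(\gamma_0)$, and the bi-Lipschitz comparison of marked lengths then give the sandwich $\overline m_{L/K}^{S',\gamma_0'}\le\phi_*(\overline m_L^{S,\gamma_0})\le\overline m_{KL}^{S',\gamma_0'}$ exactly as you say; dividing by $\Vert\overline m_L^{S,\gamma_0}\Vert$, invoking Lemma \ref{lem:total measure} for both surfaces, applying Theorem \ref{main} to all three quotients, and using that inequalities of positive Radon measures on the compact space $T^1S'$ persist under weak-$*$ limits yields a genuine comparability $C_1\FM^{S'}\le\phi_*(\FM^S)\le C_2\FM^{S'}$ with $C_1,C_2>0$, which is stronger than mere equivalence of measure classes. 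The paper instead pushes the integral formula of Lemma \ref{the lemma} through $\phi_*$ and passes to polar coordinates via Lemma \ref{lem:polar}, integrating out the radial variable to arrive at the explicit Radon--Nikodym derivative \eqref{eq rnd}, a power of the length ratio $\ell_{S'}(\lambda)/\ell_S(\phi^{-1}(\lambda))$. Your sandwich argument is shorter and more elementary and fully proves Theorem \ref{theorem 3}, but the explicit formula is precisely what the paper needs afterwards for Theorem \ref{theorem 4}, where one must characterize when the derivative is identically $1$; so your approach buys economy here but would not on its own carry the next result.
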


It is maybe worth pointing out that in the course of the proof of Theorem \ref{theorem 3} we will give an explicit formula for the Radon-Nikodym derivative of $\phi_*(\FM^{S})$ with respect to $\FM^{S'}$.

While, by Theorem \ref{theorem 3}, the measure class of $\FM^S$ is independent of the particular metric, we prove, as mentioned in the abstract, that the measure $\FM^S$ is itself rich enough to distinguish between hyperbolic metrics:

\begin{sat}\label{theorem 4}
  A homeomorphism $\phi:S\to S'$ between closed orientable hyperbolic surfaces is isotopic to an isometry if and only if $\phi_*(\FM^S)=\FM^{S'}$.
\end{sat}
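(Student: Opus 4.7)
The forward direction is formal: if $\phi$ is isotopic to an isometry $\psi:S\to S'$ then $\phi_*=\psi_*$ on flow-invariant measures, and $\psi$ induces a length-preserving bijection between primitive periodic orbits of $\rho^S$ of type $\gamma_0$ and those of $\rho^{S'}$ of type $\psi(\gamma_0)$, intertwining the arc-length parametrizations. Hence $\psi_*(m_L^{S,\gamma_0})=m_L^{S',\psi(\gamma_0)}$ for every $L$, and applying Theorem~\ref{main} to both sides yields $\psi_*(\FM^S)=\FM^{S'}$.

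For the converse, suppose $\phi_*(\FM^S)=\FM^{S'}$. The plan is to exploit the explicit formula for the Radon-Nikodym derivative $d\phi_*(\FM^S)/d\FM^{S'}$ promised in the discussion following Theorem~\ref{theorem 3}. That density, at a vector $v\in T^1S'$ tangent to a leaf of a measured lamination $\lambda'$ lying in the $\FM^{S'}$-generic support, should express the infinitesimal ratio between the $S$-arc-length along the $\phi^{-1}$-image of the leaf and the $S'$-arc-length along the leaf itself---equivalently, the reciprocal of the pointwise stretch of $\phi$ along the leaf through $\phi^{-1}(v)$. The hypothesis $\phi_*(\FM^S)=\FM^{S'}$ then forces this stretch factor to equal $1$ for $\FM^{S'}$-almost every $v$.

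The final step is to extract the marked length spectrum and invoke rigidity. For a primitive closed geodesic $\gamma\subset S$ of type $\gamma_0$, I would integrate the stretch-$1$ identity along a sequence of closed $\rho^{S'}$-orbits of type $\phi(\gamma_0)$ which equidistribute to $\FM^{S'}$ by Theorem~\ref{main}, and pass to the limit to conclude $\ell_{S'}(\phi(\gamma))=\ell_S(\gamma)$. Letting $\gamma_0$ range over all conjugacy classes of primitive closed curves shows that $\phi$ preserves the marked length spectrum, and Otal's rigidity theorem for closed hyperbolic surfaces then implies that $\phi$ is isotopic to an isometry. The main obstacle is expected to be precisely this passage: since each individual closed orbit carries zero $\FM^{S'}$-mass, moving from ``stretch $=1$ almost everywhere'' to ``$\ell_{S'}(\phi(\gamma))=\ell_S(\gamma)$ for every particular $\gamma$'' requires either identifying a canonical continuous representative of the Radon-Nikodym density on the support, or testing $\phi_*(\FM^S)$ and $\FM^{S'}$ against a carefully chosen family of continuous functions on $T^1 S'$ concentrated near $\gamma$ and extracting the length identity from the equidistribution statement in Theorem~\ref{main} applied to type $\gamma_0$.
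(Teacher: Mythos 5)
Your forward direction is fine and matches the paper. For the converse, the overall shape (use the Radon--Nikodym density, extract a length identity, invoke rigidity) is reasonable, but there are two real problems.

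First, you misread what the density is. The explicit formula in \eqref{eq rnd} is not a pointwise ``infinitesimal stretch'' of $\phi$ along leaves; it is (up to the constant $\FM_{\Thu}(\{\ell_{S}(\cdot)\le 1\})/\FM_{\Thu}(\{\ell_{S'}(\cdot)\le 1\})$) the function $\bigl(\ell_{S'}(\lambda)/\ell_S(\phi^{-1}(\lambda))\bigr)^{6g-5}$, where $\lambda$ is the uniquely ergodic lamination whose leaf carries $v$. This is a \emph{global} length ratio, constant along the whole lamination and raised to the power $6g-5$, not an infinitesimal quantity. Consequently, $\phi_*(\FM^S)=\FM^{S'}$ does \emph{not} force ``stretch $=1$'' but rather $\ell_S(\lambda)=c\,\ell_{S'}(\phi(\lambda))$ for a fixed constant $c$ and $\FM_{\Thu}$-a.e.\ uniquely ergodic $\lambda$; the constant $c$ is not obviously $1$ and must be dealt with.

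Second, the gap you flag is real, and the equidistribution idea you sketch does not close it: periodic orbits (and in particular non-simple closed geodesics) carry zero $\FM^{S'}$-mass, lie off the $1$-dimensional support, and generic equidistributing orbits of a fixed type tell you nothing about a particular closed geodesic $\gamma$. The paper's way around this is much simpler than you expect and does not go through the marked length spectrum or Otal at all. One observes that $\ell_S(\cdot)$ and $\ell_{S'}(\phi(\cdot))$ are continuous homogeneous functions on $\CM\CL(S)$, that the uniquely ergodic laminations are dense there (Masur), and hence that the identity $\ell_S(\lambda)=c\,\ell_{S'}(\phi(\lambda))$ extends by continuity to \emph{all} of $\CM\CL(S)$. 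Then one invokes Thurston's Lipschitz-metric result: if $\phi$ is not homotopic to an isometry, $\Lip(\phi)>1$ and $\Lip(\phi)=\max_{\lambda}\ell_{S'}(\phi(\lambda))/\ell_S(\lambda)$, so (applying this to both $\phi$ and $\phi^{-1}$) one finds $\alpha,\beta\in\CM\CL(S)$ with $\ell_S(\alpha)<\ell_{S'}(\phi(\alpha))$ and $\ell_S(\beta)>\ell_{S'}(\phi(\beta))$, which is incompatible with $\ell_S=c\,\ell_{S'}\circ\phi$ for any single constant $c$. This simultaneously disposes of the unknown constant $c$ and avoids having to say anything about non-simple geodesics or the full marked length spectrum, which is why Otal's theorem is never needed.
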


One should keep in mind that Theorem \ref{theorem 4} fails for variable curvature metrics. Indeed, as we will see in Section \ref{sec8}, there are $S$ and $S'$ negatively curved, with the same area, and non-isometric to each other, but such that there is a homeomorphism $\phi:S\to S'$ with $\phi_*\big(\FM^S)=\FM^{S'}$. Moreover here the surface $S$ can be chosen to be hyperbolic.
\medskip

After having presented the results of this paper, let us give a brief indication of the tools used in the proofs. We rely heavily on the work of Mirzakhani \cite{Maryam1,Maryam2} on counting geodesics of a given type $\gamma_0$ and with at most length $L$. More concretely she proved that
$$\vert\{\gamma\text{ of type }\gamma_0\text{ and with }\ell_S(\gamma)\le L\}\vert\approx C\cdot L^{6g-6}$$
where $C=C(\gamma_0,S)$ is a constant and where $\approx$ indicates that the ratio between both sides tends to 1 as $L$ grows. In  \cite{book} we recovered this result proving that the limit
\begin{equation}\label{eq-measureCconverge}
  \lim_{L\to\infty}\frac 1{L^{6g-6}}\sum_{\gamma\sim\gamma_0}\delta_{\frac 1L\gamma}
\end{equation}
exists on the space of measures on the space $\CC(S)$ of currents on $S$---here $\delta_x$ is the Dirac measure centred at $x$ and $\gamma\sim\gamma_0$ means that they both have the same type. The relation between the existence of this limit and the results mentioned earlier is via the fact that the space of currents on $S$ is naturally homeomorphic to the space of measures on $T^1S$ which are invariant under the geodesic flow and the geodesic flip $v\mapsto -v$. 
\medskip

In Section \ref{sec2} we recall what currents are, as well as the homeomorphism between the spaces of currents and of flip and flow invariant measures. In Section \ref{sec3} we recall what is the Thurston measure on the space of measured laminations, discussing briefly how to write it in polar coordinates and stating \eqref{eq-measureCconverge} precisely. In Section \ref{sec5}, the bulk of the paper, we prove Theorem \ref{main}. In Section \ref{sec6}, Section \ref{sec7}, and Section \ref{sec8} we prove the other three theorems mentioned above. Finally, in Section \ref{sec9} we discuss briefly what happens if the surface has punctures.

\subsection*{Comments}

Before moving on let us comment briefly on the assumptions on the surface $S$ and on a subtility in what we do here.

\subsubsection*{Punctures.}
The reader might be wondering how important is the fact that we are dealing with a closed surface. Not at all. All theorems hold as stated and we only consider the case of closed surfaces to improve the readability of the paper. Indeed, while it is not hard to work with currents on surfaces which are not closed, it definitively needs more attention to details and one needs to keep making sure to point out that everything happens on compact sets. As announced above, we will discuss in Section \ref{sec9} the changes needed to extend the results to the case with punctures.

\subsubsection*{Orientability}
As we used to do ourselves, the reader might well embrace the belief that non-orientable surfaces are an urban myth, or rather that everything which is true for orientable surfaces is, after minimal thinking, also true for non-orientable surfaces. Well, in this area things are definitively different. For example, it is due to Gendulphe \cite{Guendulphe} that there is no integer with which one can replace $6g-6$ so that \eqref{eq-measureCconverge} still holds if the surface is not orientable. This means that our proof really uses the fact that $S$ is orientable, and that we do not know what to do in the non-orientable setting.
  
\subsubsection*{Time reversal}
Since the geodesic flow commutes with the geodesic flip we get that the measures $b^S_L$ in \eqref{eq:bowen} are not only flow invariant but also flip invariant. We then defined the measures $m_L^{S,\gamma_0}$ so that they are also flip invariant, referring to the {\em type} of a geodesic instead of its {\em oriented type}. So far, all seems identical. Well, there is a difference. Bowen's result applies to Axiom A flows, and in particular to any perturbation of the geodesic flow, flip invariant or not. In this case the flip invariance of the measures \eqref{eq:bowen} is lost, but Bowen's theorem holds. Flip invariance is however key for us. The reason is that currents, as least in the setting needed to establish \eqref{eq-measureCconverge}, are inherently flip invariant. This means we do not know what to do if instead of working with {\em type} we work with {\em oriented type}.

\subsection*{Acknowledgements}
We are grateful to Scott Wolpert and especially to Steven Zelditch for not only asking the question leading to this paper, but also for the ensuing discussions. All of this happened after a talk by the first author at the conference {\em Analysis and Geometry--A symposium to honor the 70th birthday of Scott Wolpert}, and she thanks the organizers for the invitation.

\section{}\label{sec2}
In this section we recall a few definitions and facts about currents.  We refer to \cite{AL,BonahonFrench,Bonahon2,Bonahon,book} for background and details.
\medskip

Denoting by $S$ a closed surface of negative curvature, let $\tilde S$ be its universal cover and $\CG(\tilde S)$ the set of unoriented geodesics therein. Since geodesics in $\tilde S$ are determined by their end points, we have an identification
$$\CG(\tilde S)=(\D_\infty\tilde S\times\D_\infty\tilde S\setminus\Delta)/_{\text{flip}}$$
where $\D_\infty\tilde S$ is the boundary at infinity and where $\Delta$ is the diagonal. The fundamental group $\pi_1(S)$ acts by deck transformation on $\tilde S$ and hence on $\CG(\tilde S)$. A {\em (geodesic) current} on $S$ is a $\pi_1(S)$-invariant Radon measure on $\CG(\tilde S)$. There are plenty of currents. For example, if $\gamma$ is a closed unoriented geodesic in $S$ then 
$$\widehat\gamma(U)=\text{number of lifts of }\gamma\text{ belonging to }U\text{ for }U\subset\CG(\tilde S)$$
defines a current, the so-called {\em counting current associated to }$\gamma$. The set $\CC(S)$ of all currents in $S$ is, when endowed with the weak-*-topology, a cone in a linear space and multiples of counting currents are dense therein \cite{BonahonFrench,book}.

It will be important for us that homeomorphisms $\phi:S\to S'$ between closed negatively curved surfaces induce homeomorphisms between the associated spaces of currents. Indeed, it follows for instance from the Milnor-Svarc lemma that their lifts $\tilde\phi:\tilde S\to\tilde S'$ to the universal cover are quasi-isometries. This implies in particular that they have extensions
$$\D_\infty\tilde\phi:\D_\infty\tilde S\to\D_\infty\tilde S'$$
to the boundary at infinity---extensions which are equivariant under the same homomorphism $\phi_*:\pi_1(S)\to\pi_1(S')$ with respect to which the chosen lift $\tilde\phi$ was equivariant.

The boundary map $\D_\infty\tilde\phi$ induces an again equivariant map between the sets of unoriented geodesics
$$\phi_*:\CG(\tilde S)\to\CG(\tilde S')$$
and hence a map, actually a homeomorphism,
\begin{equation}\label{eq-currents homeo}
  \phi_*:\CC(S)\to\CC(S')
\end{equation}
between the corresponding spaces of currents. Note that this homeomorphism just depends on the free homotopy class of the map $\phi$ and that it satisfies
  $$\phi_*(\widehat\gamma)=\widehat{\phi(\gamma)}$$
for every curve $\gamma$. In words, $\phi_*$ maps the counting current associated to the geodesic $\gamma$ to the counting current associated to the geodesic freely homotopic to $\phi(\gamma)$.
\medskip

The space $\CC(S)$ of currents on $S$ is closely related to the space $\CM_{\text{flip-flow}}(S)$ of Radon measures on $T^1S$ which are invariant both under the geodesic flow and the geodesic flip. Indeed, every current $\widehat\mu\in\CC(S)$ induces a flip and flow invariant measure $\overline\mu=dt\otimes\widehat\mu$ on $T^1S$ as follows: If $Z\subset T^1\tilde S$ is a transversal to the geodesic flow on the universal cover meeting each geodesic at most once then the $\overline\mu$-measure of the flow box
$$[0,t_0]\times Z=\cup_{t\in[0,t_0]}\rho_t^{\tilde S}(Z)\subset T^1\tilde S$$
is given by
\begin{equation}\label{eq-product}
  \overline\mu([0,t_0]\times Z)=\frac 12 t_0\cdot\widehat\mu(Z)
\end{equation}
where $\widehat\mu(Z)$ is the $\widehat\mu$-measure of the subset of $\CG(\tilde S)$ represented by the orbits $t\mapsto\rho_t^{\tilde S}(v)$ with $v\in Z$. The measure $\overline\mu=dt\otimes\widehat\mu$ is $\pi_1(S)$-invariant and hence descends to a measure which we still denote by $\overline\mu=dt\otimes\widehat\mu$ on $T^1S$. This is, by construction, flip and flow invariant. In other words, we have given a map
\begin{equation}\label{eq-homeo}
  \CC(S)\to\CM_{\text{flip-flow}}(S),\ \ \widehat\mu\mapsto\overline\mu=dt\otimes\widehat\mu.
\end{equation}
It is well-known that this map is a homeomorphism \cite{AL,book}. 

\begin{quote}
  {\bf Notation.} As we have been doing so far, we will decorate currents with `hats' and flip and flow invariant measures by `bars'. If we write $\widehat\lambda$ and $\overline\lambda$ then the former is a current, the latter is a flip and flow invariant measure, and one is the image of the other under the homeomorphisms \eqref{eq-homeo}.
\end{quote}

We now explain the pesky $\frac 12$ in \eqref{eq-product}. The flip and flow invariant measure $\bar\gamma$ associated to the counting current $\widehat\gamma$ is given by
$$\overline\gamma=\frac 12(\vec\gamma+\vec\gamma_{\text{\tiny flip}})$$
where we have chosen an orientation of $\gamma$, where $\vec\gamma$ is as in \eqref{eq:bla}, and where $\gamma_{\text{\tiny flip}}$ stands for the oppositly oriented orbit. The factor $\frac 12$ is thus there to guarantee that the measure $\overline\gamma$ has total measure $\ell_S(\gamma)$ for every curve $\gamma$:
$$\Vert\overline\gamma\Vert=\Vert\vec\gamma\Vert=\ell_S(\gamma).$$
Note also that these $\frac 12$ factors disappear when we normalize measures to be probability measures. For example, if we set
\begin{equation}\label{eq-set of geodesics}
  \CS_L(S,\gamma_0)=\{\text{unoriented geodesics }\gamma\text{ of type }\gamma_0\text{ with }\ell_S(\gamma)\le L\}
\end{equation}
then the map $\CP_L(S,\gamma_0)\to\CS_L(S,\gamma_0)$ is two to one: both $\gamma$ and $\gamma_{\text{\tiny flip}}$, and only those, get mapped to the same unoriented geodesic. This means that the measure
\begin{equation}\label{eq:bowen2new}
  \overline m^{S,\gamma_0}_L=\sum_{\gamma\in\CS_L(S,\gamma_0)}\overline\gamma
\end{equation}
is exactly one half of the measure $m^{S,\gamma_0}_L$ defined in \eqref{eq:bowen2}:
$$\overline m^{S,\gamma_0}_L=\frac 12\cdot m^{S,\gamma_0}_L.$$
We get thus the following fact that we record here for later use:

\begin{lem}
For any $\gamma_0$ and any $L$ large enough we have
\begin{equation}\label{eq ricky gervais}
  \frac 1{\Vert\overline m^{S,\gamma_0}_L\Vert}\overline m^{S,\gamma_0}_L=\frac 1{\Vert m^{S,\gamma_0}_L\Vert} m^{S,\gamma_0}_L
\end{equation}
where $m^{S,\gamma_0}_L$ is as in \eqref{eq:bowen2} and $\overline m^{S,\gamma_0}_L$ is as in \eqref{eq:bowen2new}.\qed
\end{lem}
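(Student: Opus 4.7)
The plan is to observe that the lemma is purely a bookkeeping statement: the two measures whose normalizations appear on the left and right of \eqref{eq ricky gervais} differ by an overall scalar factor, so their normalizations coincide automatically. There is no real obstacle; the whole content is to combine three identities that have just been recorded.

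First I would unpack the two sums. The forgetful map $\CP_L(S,\gamma_0)\to\CS_L(S,\gamma_0)$ sending an oriented periodic orbit to its underlying unoriented closed geodesic is exactly two-to-one, as noted in the paragraph before the lemma: the preimage of an unoriented geodesic $\gamma$ is the pair $\{\gamma,\gamma_{\text{\tiny flip}}\}$. Grouping the terms of \eqref{eq:bowen2} according to this map gives
$$m^{S,\gamma_0}_L=\sum_{\gamma\in\CP_L(S,\gamma_0)}\vec\gamma=\sum_{\gamma\in\CS_L(S,\gamma_0)}\bigl(\vec\gamma+\vec\gamma_{\text{\tiny flip}}\bigr).$$

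Next I would invoke the identity $\overline\gamma=\tfrac12(\vec\gamma+\vec\gamma_{\text{\tiny flip}})$, which is precisely the relation recalled just above the lemma between the flip-flow measure $\overline\gamma$ associated to the counting current $\widehat\gamma$ and the directed orbital measures. Substituting into the previous display and using the definition \eqref{eq:bowen2new} of $\overline m^{S,\gamma_0}_L$ yields
$$m^{S,\gamma_0}_L=\sum_{\gamma\in\CS_L(S,\gamma_0)}2\,\overline\gamma=2\,\overline m^{S,\gamma_0}_L,$$
exactly as stated in the paragraph preceding the lemma. Taking total masses gives $\Vert m^{S,\gamma_0}_L\Vert=2\Vert\overline m^{S,\gamma_0}_L\Vert$.

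Finally, since $\overline m^{S,\gamma_0}_L$ and $m^{S,\gamma_0}_L$ differ by the positive scalar $\tfrac12$, their probability normalizations coincide, proving \eqref{eq ricky gervais}. The hypothesis that $L$ be large enough is used only to ensure that there is at least one closed geodesic of type $\gamma_0$ with $\ell_S(\gamma)\le L$, so that both denominators are nonzero; any $L$ exceeding the shortest length in the type class $\gamma_0$ works. This is the only ``obstacle,'' and it is not really one.
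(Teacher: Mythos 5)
Your proof is correct and follows exactly the same route as the paper, which treats the lemma as immediate from the discussion just preceding it: the two-to-one map $\CP_L(S,\gamma_0)\to\CS_L(S,\gamma_0)$ together with $\overline\gamma=\tfrac12(\vec\gamma+\vec\gamma_{\text{\tiny flip}})$ gives $m^{S,\gamma_0}_L=2\,\overline m^{S,\gamma_0}_L$, so the normalizations agree. Your added remark clarifying that ``$L$ large enough'' just guarantees nonzero total mass is a sensible (if minor) explicit touch.
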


Anyways, the reader might well be thinking that those $\frac 12$ factors are maybe not that painful, and might indeed agree that they are necessary because one is working with unoriented geodesics... But why is one working with unoriented geodesics in the first place? The main reason is that, other than the counting currents $\widehat\gamma$ associated to closed geodesics, the main currents for us are measured laminations, and measured laminations are by their own nature unoriented.

\section{}\label{sec3}
Let $\CM\CL(S)$ be the set of measured laminations on $S$. We can consider measured laminations as currents. Indeed $\CM\CL(S)$ is nothing other than the set of currents whose support projects to a lamination of $S$, that is a compact set foliated by disjoint simple geodesics. As for curves, we will denote by $\widehat\lambda\in\CC(S)$ the current associated to the measured lamination $\lambda\in\CM\CL(S)$. We refer to \cite{AL,Bonahon2,book} for basic facts about measured laminations and their relation to currents.

What will be very important for us is that the set $\CM\CL(S)$ of all measured laminations on $S$ is naturally endowed with a measure, the {\em Thurston measure} $\FM_{\Thu}$. There are different ways of obtaining the Thurston measure. For example one can get it via the standard symplectic structure on the space of measured laminations, but the more natural way here is as a scaling limit
\begin{equation}\label{eq:thurston measure}
  \FM_{\Thu}=\lim_{L\to\infty}\frac 1{L^{6g-6}}\sum_{\gamma\in\CM\CL_\BZ(S)}\delta_{\frac 1L\widehat\gamma}
\end{equation}
where $g$ is the genus of $S$, where $\CM\CL_\BZ(S)$ is the set of integrally weighted simple multicurves on $S$, where we denote by $\widehat\gamma$ the counting current associated to the multicurve $\gamma$, and where the limit is taken with respect to the weak-*-topology on the space of currents. See for example \cite{Maryam1,book} for the construction of the Thurston measures and \cite{MT} for the relations between the two possible constructions we just mentioned. Anyways, the reason why we care about the Thurston measure is that we have the following theorem:

\begin{sat}\label{measure theorem}
  For every closed geodesic $\gamma_0$ in $S$ there is a positive constant $C(\gamma_0)$ such that
  $$\lim_{L\to\infty}\frac 1{L^{6g-6}}\sum_{\gamma\sim\gamma_0}\delta_{\frac 1L\widehat\gamma}=C(\gamma_0)\cdot\FM_{\Thu}.$$
Here $\delta_x$ stands for the Dirac measure centred at $x$, the sum is taken over all $\gamma$ of type $\gamma_0$, and the convergence takes place with respect to the weak-*-topology on the space $\CC(S)$.
\end{sat}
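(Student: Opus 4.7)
The plan is to identify every weak-$*$ subsequential limit of the measures
$$\mu_L := \frac{1}{L^{6g-6}} \sum_{\gamma \sim \gamma_0} \delta_{\frac{1}{L}\widehat\gamma}$$
on $\CC(S)$ as a $\Map(S)$-invariant Radon measure supported on $\CM\CL(S)$, and then to invoke Masur's classification to conclude that every such limit is proportional to the Thurston measure $\FM_{\Thu}$.

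I would begin by observing that the set of closed geodesics of type $\gamma_0$ forms, by definition of ``type'', a single orbit of $\Map(S)$ on primitive closed curves. Since the $\Map(S)$-action on $\CC(S)$ commutes with the dilations $\widehat\lambda \mapsto \frac{1}{L}\widehat\lambda$, each $\mu_L$ is $\Map(S)$-invariant. Next I would establish precompactness of $\{\mu_L\}$ as Radon measures on $\CC(S)$. Extending the length function $\ell_S$ continuously to $\CC(S)$ (so that $\ell_S(\widehat\gamma)=\ell_S(\gamma)$) and using that it is proper on $\CC(S)$, the mass $\mu_L(\{\ell_S \le R\})$ equals $\frac{1}{L^{6g-6}}|\{\gamma \sim \gamma_0 : \ell_S(\gamma) \le RL\}|$, which is $O(R^{6g-6})$ uniformly in $L$ by Mirzakhani's counting theorem. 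Any subsequential limit $\mu_\infty$ is therefore locally finite and $\Map(S)$-invariant; moreover, since a sequence $\frac{1}{L_n}\widehat{\gamma_n}$ with $\ell_S(\gamma_n) = O(L_n)$ has self-intersection tending to zero in the limit, $\mu_\infty$ must be supported on $\CM\CL(S)$.

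By Masur's ergodicity theorem, together with the subsequent classification of Lindenstrauss--Mirzakhani, the only locally finite $\Map(S)$-invariant Radon measure of full type on $\CM\CL(S)$ is, up to a scalar, the Thurston measure; so $\mu_\infty = C \cdot \FM_{\Thu}$ for some $C \ge 0$. To pin down $C = C(\gamma_0)$ and to show the whole sequence converges (not merely along subsequences), I would evaluate $\mu_L$ against a continuous approximation of the indicator of $\{\ell_S \le 1\}$ and apply Mirzakhani's counting theorem to compute
$$\lim_{L\to\infty}\frac{1}{L^{6g-6}}|\{\gamma\sim\gamma_0:\ell_S(\gamma)\le L\}| \;=\; C(\gamma_0)\cdot\FM_{\Thu}(\{\ell_S\le 1\}),$$
which determines $C(\gamma_0)$ uniquely (and positively, since both sides are strictly positive).

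The main obstacle is the application of the Masur/Lindenstrauss--Mirzakhani classification: one must rule out that $\mu_\infty$ concentrates on a positive-codimension $\Map(S)$-invariant subset of $\CM\CL(S)$. This amounts to showing $\mu_\infty$ is absolutely continuous with respect to $\FM_{\Thu}$; the input needed is the density, in the appropriate sense, of rescaled $\Map(S)$-orbits of counting currents of simple multicurves in $\CM\CL(S)$---precisely the density encoded in the definition \eqref{eq:thurston measure} of $\FM_{\Thu}$---together with the quantitative upper bounds from Mirzakhani's counting theorem that were already used in the tightness step.
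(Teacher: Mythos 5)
The paper does not actually prove Theorem \ref{measure theorem}; it quotes it and cites the literature: Mirzakhani \cite{Maryam1} for simple multicurves, the combination \cite{ES}+\cite{Maryam2} for general curves, and \cite{book} for a self-contained proof. Your sketch is essentially the \cite{ES}+\cite{Maryam2} route, so it is the right circle of ideas, but there is a genuine gap in the middle.

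The gap is the absolute continuity step, which you flag as ``the main obstacle'' and then do not close. Writing ``the input needed is the density of rescaled $\Map(S)$-orbits of counting currents of simple multicurves in $\CM\CL(S)$'' does not give a mechanism: the measures $\mu_L$ count currents $\widehat\gamma$ for $\gamma$ of type $\gamma_0$, which need not be simple, and the density of \emph{simple} multicurves in $\CM\CL(S)$ says nothing by itself about where the mass of $\mu_\infty$ lies. What is actually needed --- and what \cite{ES} supplies --- is a comparison inequality: for suitable open sets $U$, the number of $\gamma\sim\gamma_0$ with $\frac{1}{L}\widehat\gamma\in U$ is bounded above by a constant (depending on $\gamma_0$ but not $L$) times the number of integral simple multicurves landing in a fixed enlargement of $U$ at scale $L$. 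That inequality is what forces every subsequential limit $\mu_\infty$ to satisfy $\mu_\infty \ll \FM_{\Thu}$, and proving it requires a surgery/combinatorial argument (resolving self-intersections of $\gamma$ to produce nearby simple multicurves and controlling the multiplicity of that assignment). Without that comparison, nothing in your argument prevents $\mu_\infty$ from charging, say, the $\Map(S)$-invariant locus of laminations containing a given closed leaf, which has $\FM_{\Thu}$-measure zero.

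A smaller but real point: once you have $\mu_\infty \ll \FM_{\Thu}$, invoking the Lindenstrauss--Mirzakhani classification is unnecessary and in fact does not help --- you already get $\mu_\infty = c\,\FM_{\Thu}$ from $\Map(S)$-invariance and Masur's ergodicity of $\FM_{\Thu}$ alone. Conversely, without the absolute continuity, the classification by itself cannot rule out the other ergodic components, which is precisely the obstruction you name. Finally, to get $c=C(\gamma_0)>0$ and to promote subsequential convergence to convergence, you appeal to Mirzakhani's asymptotic count from \cite{Maryam2}; that is legitimate but it is a major input (it is, essentially, the ``other half'' of the theorem), and it should be presented as such rather than as a routine evaluation.
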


\begin{bem}
  Theorem \ref{measure theorem} is due to Mirzakhani \cite{Maryam1} when $\gamma_0$ is a simple multi\-curve. For general curves, it follows for example when we combine \cite{ES} with \cite{Maryam2}, another result of Mirzakhani. A complete independent proof of Theorem \ref{measure theorem} appears in \cite{book}.
  \end{bem}

\begin{bem}
  For the purpose of this note, it would fully suffice to work with a weaker version of Theorem \ref{measure theorem}, a version asserting that the limit exists up to extracting subsequences---see \cite[Proposition 4.1]{ES} and \cite[Chapter 6]{book} for a proof of this weaker version.
\end{bem}

The Thurston measure $\FM_{\Thu}$ has many similarities with Lebesgue measure on $\BR^{6g-6}$. For example, it is homogenous of degree $6g-6$, meaning that for all $U\subset\CC(S)$ measurable we have
$$\FM_{\Thu}(t\cdot U)=t^{6g-6}\cdot\FM_{\Thu}(U)$$
for all $t\ge 0$. It follows that if $X\subset\CC(S)$ is a compact set such that for every $\alpha\in\CC(S)\setminus\{0\}$ there is a unique $t\in\BR_{>0}$ with $t\cdot \alpha\in X$, then there is a unique measure $\FN$ on $X$ such that the map
$$(\BR_{>0}\times X, t^{6g-7}\, dt\otimes\FN)\to(\CC(S)\setminus\{0\},\FM_{\Thu})$$ is a measure preserving homeomorphism. The measure $\FN(U)$ of $U\subset X$ is given by
$$\FN(U)=\lim_{t\searrow 1}\frac{\FM_{\Thu}([1,t]\times U)}{t-1}.$$
Combining all of this together we have:

\begin{lem}\label{lem:polar}
Let $X\subset\CC(S)$ be a compact set so that for every $\alpha\in\CC(S)\setminus\{0\}$ there is a unique $t\in\BR_{>0}$ with $t\cdot\alpha\in X$. There is a measure $\FN$ on the set $X$ such that the map
  $$(\BR_{>0}\times X, t^{6g-7}\, dt\otimes\FN)\to(\CC(S)\setminus\{0\},\FM_{\Thu})$$
is a measure preserving homeomorphism.\qed
\end{lem}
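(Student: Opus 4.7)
The plan is to leverage the homogeneity of the Thurston measure to construct $\FN$ as a ``transverse'' measure on $X$, and then to verify the measure-preserving property of the radial parametrisation on a convenient generating class of Borel sets.

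First I would verify that the scaling map $\Phi\colon\BR_{>0}\times X\to\CC(S)\setminus\{0\}$, $(t,x)\mapsto t\cdot x$, is a homeomorphism. Bijectivity is immediate from the uniqueness hypothesis on $X$ (note in particular that $0\notin X$, as otherwise no nonzero current could admit a polar decomposition). Continuity of $\Phi$ follows from the continuity of scaling on $\CC(S)$. For continuity of $\Phi^{-1}$ I would argue by sequences: if $\alpha_n=t_nx_n\to\alpha=tx$ with $\alpha\neq 0$, the compactness of $X$ together with $0\notin X$ forces the scalars $t_n$ to be bounded above (else $x_n=t_n^{-1}\alpha_n\to 0$) and bounded below away from $0$ (else $\alpha_n=t_nx_n\to 0$). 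Any convergent sub-subsequence then yields a polar decomposition of $\alpha$, and by uniqueness must converge to $(t,x)$; hence the whole sequence does.

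Next, for a Borel set $U\subset X$ I would set $\FN(U)=(6g-6)\cdot\FM_{\Thu}(C_1(U))$, where $C_a^b(U)=\{s\cdot x:s\in[a,b],\,x\in U\}$. Homogeneity of $\FM_{\Thu}$ ensures this is a well-defined Borel measure on $X$ and yields, for $0<a<b$, the identity $\FM_{\Thu}(C_a^b(U))=(b^{6g-6}-a^{6g-6})\FN(U)/(6g-6)$, from which the limit description $\FN(U)=\lim_{t\searrow 1}\FM_{\Thu}(C_1^t(U))/(t-1)$ drops out immediately. Since the sets $C_a^b(U)$ are precisely the $\Phi$-images of the rectangles $[a,b]\times U$, they generate the Borel $\sigma$-algebra of $\CC(S)\setminus\{0\}$, and so checking the measure-preserving property reduces to comparing this value with $\int_a^b t^{6g-7}\,dt\cdot\FN(U)=(b^{6g-6}-a^{6g-6})\FN(U)/(6g-6)$, which matches.

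The main obstacle is the topological step, namely the continuity of $\Phi^{-1}$ in the weak-$*$-topology on $\CC(S)$; everything afterwards is a formal consequence of the homogeneity of $\FM_{\Thu}$. The compactness of $X$ together with the fact that $0\notin X$ is doing all the work there, and it is worth isolating this as a preliminary observation before proceeding to the measure-theoretic identity.
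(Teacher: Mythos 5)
Your proposal is correct and follows the same route the paper takes, namely deducing the polar decomposition directly from the degree-$(6g-6)$ homogeneity of $\FM_{\Thu}$; you simply spell out the homeomorphism check and the $\pi$-system argument that the paper leaves implicit, and your cone formula $\FN(U)=(6g-6)\,\FM_{\Thu}(C_0^1(U))$ reproduces exactly the limit description $\FN(U)=\lim_{t\searrow 1}\FM_{\Thu}([1,t]\times U)/(t-1)$ that the paper records. (Minor slip: you write $C_1(U)$ where you mean $C_0^1(U)$.)
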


Continuous homogenous positive functions $F:\CC(S)\to\BR_{\ge 0}$ on the space of currents are a particular source of such sets $X$, where {\em positive} means that $F(\alpha)>0$ for all $\alpha\neq 0$ and {\em homogenous} means that $F(t\cdot\alpha)=t\cdot F(\alpha)$. Indeed, if we are given any such function $F$ then the set $X_F=F^{-1}(1)$ satisfies the condition in Lemma \ref{lem:polar}. Let then $\FN_F$ be the measure on $X_F$ provided by the said lemma. This  measure comes particularly in handy when integrating functions on $\CC(S)$ whose value at $\widehat\alpha$ solely depends on $F(\widehat\alpha)$. For example, with notation as in Lemma \ref{lem:polar}, we have
\begin{equation}\label{eq:polar1}
  \int_{\{F(\cdot)\le 1\}}F(\widehat\lambda)^n\ d\FM_{\Thu}(\widehat\lambda)=\frac 1{6g-6+n}\FN_F(X_F)
\end{equation}
for every $n\ge 0$. Taking $n=0$ we get in particular that
\begin{equation}\label{eq:polar2}
  \FN_F(X_F)=(6g-6)\cdot\FM_{\Thu}(\{F(\cdot)\le 1\}).
\end{equation}

The reason why we care about continuous, homogenous and positive functions on $\CC(S)$ is that there is such a function
$$\ell_S:\CC(S)\to\BR_{\ge 0}$$
satisfying $\ell_S(\widehat\gamma)=\ell_S(\gamma)$ for every closed geodesic $\gamma$, where the first $\ell_S(\cdot)$ is the function we are talking about and where the second one is just the length of the geodesic $\gamma$. The reader can see \cite{EPS} and \cite{DT} for other examples of continuous homogenous functions on the space of currents---in fact we will encounter yet other such functions below.

\section{}\label{sec5}

We are now ready to prove Theorem \ref{main}. The first step is to compute the total measure of the measures $m_L^{S,\gamma_0}$ introduced above:

\begin{lem}\label{lem:total measure}
  Let $S$ be a closed orientable surface of genus $g$ endowed with a negatively curved metric and denote by $\ell_S(\cdot)$ the associated length function. For every closed geodesic $\gamma_0$ we have
$$\lim_{L\to\infty}\frac 1{L^{6g-5}}\Vert\overline m_L^{S,\gamma_0}\Vert=\frac{6g-6}{6g-5}\cdot C(\gamma_0)\cdot\FM_{\Thu}(\{\ell_S(\cdot)\le 1\})$$
where $\overline m_L^{S,\gamma_0}$ is as in \eqref{eq:bowen2new} and where $C(\gamma_0)$ is as in Theorem \ref{measure theorem}.
\end{lem}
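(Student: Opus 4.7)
The plan is to unpack the definition of $\Vert\overline m_L^{S,\gamma_0}\Vert$ and feed it into Mirzakhani's equidistribution result (Theorem \ref{measure theorem}), testing against a suitable function. Since $\Vert\overline\gamma\Vert=\ell_S(\gamma)$, we have
$$\Vert\overline m_L^{S,\gamma_0}\Vert=\sum_{\gamma\in\CS_L(S,\gamma_0)}\ell_S(\gamma).$$
Using the homogeneity $\ell_S(\tfrac 1L\widehat\gamma)=\tfrac 1L\ell_S(\gamma)$, a direct rewriting gives
$$\frac{1}{L^{6g-5}}\Vert\overline m_L^{S,\gamma_0}\Vert=\int_{\CC(S)}F(\widehat\mu)\,d\mu_L(\widehat\mu),$$
where $F(\widehat\mu)=\ell_S(\widehat\mu)\cdot\BONE_{\{\ell_S(\widehat\mu)\le 1\}}$ and $\mu_L=\tfrac{1}{L^{6g-6}}\sum_{\gamma\sim\gamma_0}\delta_{\frac 1L\widehat\gamma}$.

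Next I want to apply Theorem \ref{measure theorem}, which says $\mu_L\to C(\gamma_0)\FM_{\Thu}$ in the weak-$*$ topology, to conclude
$$\lim_{L\to\infty}\int F\,d\mu_L=C(\gamma_0)\int_{\{\ell_S(\cdot)\le 1\}}\ell_S(\widehat\lambda)\,d\FM_{\Thu}(\widehat\lambda).$$
Once this step is in place, the last move is just bookkeeping with the polar-coordinate formulas of Section \ref{sec3}: applying \eqref{eq:polar1} with $F=\ell_S$ and $n=1$ yields
$$\int_{\{\ell_S(\cdot)\le 1\}}\ell_S(\widehat\lambda)\,d\FM_{\Thu}(\widehat\lambda)=\frac{1}{6g-5}\FN_{\ell_S}(X_{\ell_S}),$$
and \eqref{eq:polar2} gives $\FN_{\ell_S}(X_{\ell_S})=(6g-6)\FM_{\Thu}(\{\ell_S(\cdot)\le 1\})$, combining to produce exactly $\tfrac{6g-6}{6g-5}\cdot C(\gamma_0)\cdot\FM_{\Thu}(\{\ell_S(\cdot)\le 1\})$.

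The one subtlety is the passage to the limit above: $F$ is bounded and supported on the compact set $\{\ell_S\le 1\}$ (recall $\ell_S$ is proper on $\CC(S)$), but has a jump along $\{\ell_S=1\}$, so weak-$*$ convergence does not apply verbatim. I would handle this by a standard sandwiching: for each $\epsilon>0$ choose continuous compactly supported $\phi_\epsilon^-\le F\le\phi_\epsilon^+$ that agree with $\ell_S$ on $\{\ell_S\le 1-\epsilon\}$ respectively $\{\ell_S\le 1\}$, vanish on $\{\ell_S\ge 1\}$ respectively $\{\ell_S\ge 1+\epsilon\}$, and satisfy $\int(\phi_\epsilon^+-\phi_\epsilon^-)\,d\FM_{\Thu}\to 0$ as $\epsilon\to 0$. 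For this last step I will use the fact that the boundary $\{\ell_S=1\}$ is $\FM_{\Thu}$-null: by homogeneity $\FM_{\Thu}(\{\ell_S=t\})=t^{6g-6}\FM_{\Thu}(\{\ell_S=1\})$ for all $t>0$, and since these sets are pairwise disjoint and contained in the finite-measure region $\{\ell_S\le 2\}$, only countably many can carry positive measure. This controlling of the indicator cutoff is the only real obstacle; everything else is a direct computation.
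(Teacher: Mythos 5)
Your proof is correct and follows essentially the same route as the paper: unpack the total mass as a sum of lengths, rewrite it as an integral of $\ell_S$ over $\{\ell_S(\cdot)\le 1\}$ against the rescaled counting measure $\frac{1}{L^{6g-6}}\sum_{\gamma\sim\gamma_0}\delta_{\frac 1L\widehat\gamma}$, pass to the limit via Theorem \ref{measure theorem}, and finish with \eqref{eq:polar1}--\eqref{eq:polar2}. The one point where you are more careful than the paper is the passage to the limit against the discontinuous integrand $\ell_S\cdot\BONE_{\{\ell_S\le 1\}}$: the paper glosses over this, whereas your sandwiching plus the observation that $\{\ell_S(\cdot)=1\}$ is $\FM_{\Thu}$-null (by homogeneity of $\FM_{\Thu}$, disjointness of the level sets, and finiteness of $\FM_{\Thu}$ on compacta) genuinely closes that small gap.
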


Note that the exponents of $L$ in Theorem \ref{measure theorem} and Lemma \ref{lem:total measure} differ, $6g-6$ versus $6g-5$.

\begin{proof}
Still writing $\gamma\sim\gamma_0$ to indicate that two closed unoriented geodesics are of the same type we have:
\begin{align*}
\frac 1{L^{6g-5}}\Vert\overline m_L^{S,\gamma_0}\Vert
&=\frac 1{L^{6g-5}}\sum_{\gamma\in\CS_L(S,\gamma_0)}\Vert\overline\gamma\Vert=\frac 1{L^{6g-5}}\sum_{\tiny\begin{array}{c}\gamma\sim\gamma_0\\ \ell_S(\gamma)\le L\end{array}}\ell_S(\gamma)\\
&=\frac 1{L^{6g-5}}\int_{\{\ell_S(\cdot)\le L\}}\ell_S(\cdot)\, d\left(\sum_{\gamma\sim\gamma_0}\delta_{\widehat\gamma}\right)\\
&=\int_{\{\ell_S(\cdot)\le L\}}\frac{\ell_S(\cdot)}L\, d\left(\frac 1{L^{6g-6}} \sum_{\gamma\sim\gamma_0}\delta_{\widehat\gamma}\right)\\
&=\int_{\{\ell_S(\cdot)\le 1\}}\ell_S(\cdot)\, d\left(\frac 1{L^{6g-6}}\sum_{\gamma\sim\gamma_0}\delta_{\frac 1L\widehat\gamma}\right)
\end{align*}
Now, Theorem \ref{measure theorem} asserts that the measures on the last line converge:
$$\lim_{L\to\infty}\frac 1{L^{6g-6}}\sum_{\gamma\sim\gamma_0}\delta_{\frac 1L\widehat\gamma}=C(\gamma_0)\cdot\FM_{\Thu}.$$
It follows that
\begin{align*}
  \lim_{L\to\infty}\frac 1{L^{6g-5}}\Vert\overline m_L^{S,\gamma_0}\Vert
  &=C(\gamma_0)\cdot\int_{\{\ell_S(\cdot)\le 1\}}\ell_S(\cdot)\, d\FM_{\Thu}\\
  &=\frac{6g-6}{6g-5}\cdot C(\gamma_0)\cdot\FM_{\Thu}(\{\ell_S(\cdot)\le 1\})
\end{align*}
where we have used \eqref{eq:polar1} and \eqref{eq:polar2} to get the last equality.
\end{proof}

Recall at this point the identification \eqref{eq-homeo} between the spaces $\CM_{\text{flip-flow}}(S)$ of flip and flow invariant measures and $\CC(S)$ of currents, and suppose that we are given a continuous function $f\in C^0(T^1S)$. Then we get a continuous function
$$\widehat f:\CC(S)\to\BR,\ \ \widehat f(\widehat\alpha)=\int_{T^1S}f(v)\, d\overline\alpha(v)$$
on the space of currents. It is continuous and homogenous. 

\begin{lem}\label{the lemma}
We have
\begin{multline*}
  \lim_{L\to\infty}\int_{T^1S}f\, d\left(\frac 1{\Vert\overline m_L^{S,\gamma_0}\Vert}\overline m_L^{S,\gamma_0}\right)=\\
  =\frac{6g-5}{(6g-6)\cdot\FM_{\Thu}(\{\ell_S(\cdot)\le 1\})}
\cdot\int_{\{\ell_S(\cdot)\le 1\}}\widehat f\ d\FM_{\Thu}\end{multline*}
for every $f\in C^0(T^1S)$.
\end{lem}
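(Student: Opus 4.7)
The plan is to mirror essentially verbatim the computation carried out in the proof of Lemma \ref{lem:total measure}, but replacing the length function $\ell_S(\cdot)$ by the function $\widehat f:\CC(S)\to\BR$ induced by $f$.

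First, observe that by the very definition of the map \eqref{eq-homeo}, the assignment $\widehat\alpha\mapsto\widehat f(\widehat\alpha)=\int_{T^1S} f\, d\overline\alpha$ is continuous, and homogeneous of degree $1$, because $\overline{t\alpha}=t\cdot\overline\alpha$ for every $t\ge 0$. Since $\overline m_L^{S,\gamma_0}=\sum_{\gamma\in\CS_L(S,\gamma_0)}\overline\gamma$, we have
\begin{align*}
\int_{T^1S}f\, d\overline m_L^{S,\gamma_0}
&=\sum_{\gamma\sim\gamma_0,\ \ell_S(\gamma)\le L}\widehat f(\widehat\gamma)
=\int_{\{\ell_S(\cdot)\le L\}}\widehat f(\cdot)\, d\Big(\sum_{\gamma\sim\gamma_0}\delta_{\widehat\gamma}\Big).
\end{align*}
Scaling by $L^{6g-5}$ and using that $\widehat f$ is $1$-homogeneous (so $\widehat f(\widehat\gamma)=L\cdot\widehat f(\tfrac 1L\widehat\gamma)$) and that $\ell_S$ is also $1$-homogeneous we get, exactly as in the proof of Lemma \ref{lem:total measure},
\begin{equation*}
\frac 1{L^{6g-5}}\int_{T^1S}f\, d\overline m_L^{S,\gamma_0}
=\int_{\{\ell_S(\cdot)\le 1\}}\widehat f(\cdot)\, d\Big(\frac 1{L^{6g-6}}\sum_{\gamma\sim\gamma_0}\delta_{\frac 1L\widehat\gamma}\Big).
\end{equation*}

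Next I invoke Theorem \ref{measure theorem} to pass to the limit. The one thing one should check is that the integrand $\widehat f$ is bounded on the integration domain and that the boundary $\{\ell_S(\cdot)=1\}$ is $\FM_{\Thu}$-null, so that weak-$*$ convergence suffices: the set $\{\ell_S(\cdot)\le 1\}$ is compact (properness of $\ell_S$ on $\CC(S)$), so continuity of $\widehat f$ gives boundedness, and the boundary is null because $\FM_{\Thu}$ is $(6g-6)$-homogeneous (so the sphere $\{\ell_S=1\}$ has the same $\FM_{\Thu}$-measure as any scaling of itself, forcing this measure to vanish). We therefore obtain
\begin{equation*}
\lim_{L\to\infty}\frac 1{L^{6g-5}}\int_{T^1S}f\, d\overline m_L^{S,\gamma_0}
=C(\gamma_0)\cdot\int_{\{\ell_S(\cdot)\le 1\}}\widehat f\, d\FM_{\Thu}.
\end{equation*}

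Finally, dividing by $\tfrac 1{L^{6g-5}}\|\overline m_L^{S,\gamma_0}\|$ and using Lemma \ref{lem:total measure}, which computes the limit of this denominator as $\tfrac{6g-6}{6g-5}\cdot C(\gamma_0)\cdot\FM_{\Thu}(\{\ell_S(\cdot)\le 1\})$, the constants $C(\gamma_0)$ cancel and we obtain the stated formula. The main (mild) subtleties are the continuity/homogeneity of $\widehat f$ and the integration-against-weak-$*$-limits verification just mentioned; the rest is bookkeeping.
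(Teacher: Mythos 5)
Your proof follows essentially the same route as the paper's: write the integral as the sum $\sum\widehat f(\widehat\gamma)$, use homogeneity of $\widehat f$ and $\ell_S$ to rescale, pass to the limit via Theorem \ref{measure theorem}, and divide through using Lemma \ref{lem:total measure}; the paper normalizes by $\Vert\overline m_L^{S,\gamma_0}\Vert$ from the start while you normalize by $L^{6g-5}$ and divide at the end, but this is cosmetic. One small slip in the extra justification you add: homogeneity gives $\FM_{\Thu}(\{\ell_S=t\})=t^{6g-6}\,\FM_{\Thu}(\{\ell_S=1\})$, not equality of the measures of all scalings; the correct reason the sphere is null is that the sets $\{\ell_S=t\}$ for $t\in[1,2]$ are pairwise disjoint and lie inside the compact (hence finite-measure) set $\{\ell_S\le 2\}$, so only countably many can have positive measure, which together with the homogeneity relation forces all of them to be null.
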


\begin{proof}
We compute as in Lemma \ref{lem:total measure}:
\begin{align*}
  \int_{T^1S}f\, d\left(\frac 1{\Vert\overline m_L^{S,\gamma_0}\Vert}\overline m_L^{S,\gamma_0}\right)
  &=\frac 1{\Vert\overline m_L^{S,\gamma_0}\Vert}\sum_{\tiny\begin{array}{c} \gamma\sim\gamma_0\\ \ell_S(\gamma)\le L\end{array}}\int_{T^1S}f\, d\overline\gamma\\
  &=\frac 1{\Vert\overline m_L^{S,\gamma_0}\Vert}\sum_{\tiny\begin{array}{c} \gamma\sim\gamma_0\\ \ell_S(\gamma)\le L\end{array}}\widehat f(\widehat\gamma)\\
  &=\frac L{\Vert\overline m_L^{S,\gamma_0}\Vert}\sum_{\tiny\begin{array}{c} \gamma\sim\gamma_0\\ \ell_S(\gamma)\le L\end{array}}\widehat f\left(\frac 1L\widehat\gamma\right)\\
  &=\frac L{\Vert\overline m_L^{S,\gamma_0}\Vert}\int_{\{\ell_S(\cdot)\le 1\}}\widehat f\ d\left(\sum_{\gamma\sim\gamma_0}\delta_{\frac1L\widehat\gamma}\right)
\end{align*}
In light of Lemma \ref{lem:total measure} we get that
\begin{equation}\label{eq:cat in the window}
  \begin{split}
    \int_{T^1S}f\, d\left(\frac 1{\Vert\overline m_L^{S,\gamma_0}\Vert}\overline m_L^{S,\gamma_0}\right)
    &\approx \frac K{C(\gamma_0)\cdot L^{6g-6}}\int_{\{\ell_S(\cdot)\le 1\}}\widehat f\ d\left(\sum_{\gamma\sim\gamma_0}\delta_{\frac1L\widehat\gamma}\right)\\
    &= \frac K{C(\gamma_0)}\int_{\{\ell_S(\cdot)\le 1\}}\widehat f\ d\left(\frac 1{L^{6g-6}}\sum_{\gamma\sim\gamma_0}\delta_{\frac1L\widehat\gamma}\right)
\end{split}
\end{equation}
where $\approx$ means that the ratio between both sides tends to $1$ when $L$ tends to $\infty$, and where
$$K=\frac{6g-5}{(6g-6)\cdot\FM_{\Thu}(\{\ell_S(\cdot)\le 1\})}.$$
Now, invoking again Theorem \ref{measure theorem} we get that the measure in the very end of \eqref{eq:cat in the window} converges to $C(\gamma_0)\cdot\FM_{\Thu}$, meaning that
$$\lim_{L\to\infty}\int_{\{\ell_S(\cdot)\le 1\}}\widehat f\ d\left(\frac 1{L^{6g-6}}\sum_{\gamma\sim\gamma_0}\delta_{\frac 1L\widehat\gamma}\right)=C(\gamma_0)\int_{\{l_S(\cdot)\le 1\}}\widehat f\ d\FM_{\Thu}.$$
  Putting all of this together we get that
  $$\lim_{L\to\infty}\int_{T^1S}f\, d\left(\frac 1{\Vert\overline  m_L^{S,\gamma_0}\Vert}\overline m_L^{S,\gamma_0}\right)= K\cdot\int_{\{\ell_S(\cdot)\le 1\}}\widehat f\ d\FM_{\Thu}$$
 as we had claimed.
\end{proof}

We are now basically done:

\begin{proof}[Proof of Theorem \ref{main}]
  It follows for example from the Riesz representation theorem (or, depending of your party affiliation, from the very definition of measure) that there is a unique measure $\FM$ on $T^1S$ with
  $$\int_{T^1S}f\, d\FM=\int_{\{\ell_S(\cdot)\le 1\}}\left(\int_{T^1S} f\, d\overline\lambda\right)d\FM_{\Thu}(\lambda)$$
  where, as usual, we denote by $\overline\lambda$ the flip and flow invariant measure associated to the measured lamination $\lambda\in\CM\CL(S)$.

  In those terms, Lemma \ref{the lemma} says that
  $$\lim_{L\to\infty}\int_{T^1S}f\, d\left(\frac 1{\Vert\overline m_L^{S,\gamma_0}\Vert}\overline m_L^{S,\gamma_0}\right)=\frac{6g-5}{(6g-6)\cdot\FM_{\Thu}(\{\ell_S(\cdot)\le 1\})}\cdot\int_{T^1S}f\ d\FM.$$
  This just means that the measures $\frac 1{\Vert\overline m_L^{S,\gamma_0}\Vert}\overline m_L^{S,\gamma_0}$ converge, when $L$ tends to $\infty$, to the measure
$$\FM^S=\frac{6g-5}{(6g-6)\cdot\FM_{\Thu}(\{\ell_S(\cdot)\le 1\})}\cdot\FM.$$
  Since, by \eqref{eq ricky gervais}, we have 
$$\frac 1{\Vert m_L^{S,\gamma_0}\Vert}m_L^{S,\gamma_0}=\frac 1{\Vert\overline m_L^{S,\gamma_0}\Vert}\overline m_L^{S,\gamma_0}$$
we are done.
\end{proof}

\section{}\label{sec6}

We are now ready to prove Theorem \ref{properties of measure}. We recall the statement for the convenience of the reader:

\begin{named}{Theorem \ref{properties of measure}}
The measure $\FM^S$ is not ergodic, has vanishing entropy, and its support has Hausdorff dimension $1$.  
\end{named}

\begin{proof}
  We start by proving that the measure $\FM^S$ is not ergodic. To begin with recall that the set
  $$\CU\CE=\{\lambda\in\CM\CL(S)\text{ with }\lambda\text{ uniquely ergodic}\}$$
  of uniquely ergodic measured lamniations has full Thurston measure by a result of Masur \cite{Masur}. Decompose the associated projective space
  $$P\CU\CE=\CU\CE/\BR_{>0}$$
  into two disjoint measurable sets
$$P\CU\CE=A_1\sqcup A_2,$$
  such that the associated subsets of $\CM\CL(S)$ have positive Thurston measure
\begin{equation}\label{eq-blablablabla}
\FM_{\Thu}(\{\lambda\in\CM\CL(S),\ [\lambda]\in A_i\})>0\text{ for }i=1,2.
\end{equation}
The sets $\CA_1,\CA_2\subset T^1S$ given by
$$\CA_i=\{v\in T^1S\text{ tangent to a leaf of the support of some }\lambda\in A_i\}$$
are then disjoint and invariant under the geodesic flow. In turn, \eqref{eq-blablablabla} implies that both have positive $\FM^S$-measure:
$$\FM^S(\CA_1),\FM^S(\CA_2)>0.$$
This proves that $\FM^S$ is not ergodic.
\medskip

Note that the support of $\FM^S$ is the set $\CK$ of vectors tangent to a leaf in the support of some measured lamination. That this set has Hausdorff dimension $1$ was proved by Fathi \cite[Theorem 3.1]{Fathi}, who also obtained in Corollary 3.4 of the same paper that the topological entropy of the restriction of the geodesic flow to $\CK$ vanishes---this implies that every measure supported by $\CK$, and in particular $\FM^S$, has vanishing entropy.
\end{proof}

\begin{bem}
  The fact the union of the supports of all measured laminations in $S$ has Hausdorff dimension $1$ is due to Birman and Series \cite{BS}. In \cite{Fathi} Fathi presents his Theorem 3.1, or more precisely its proof, as just giving a different perspective on the work of Birman and Series. The reason why we refer to Fathi's paper is that his more dynamical approach fits better what we do here.
\end{bem}

\section{}\label{sec7}
We will now prove Theorem \ref{theorem 3}. First recall that as in \eqref{eq-currents homeo} every homeomorphism
$$\phi:S\to S'$$
between negatively curved surfaces induces a homeomorphism 
$$\phi_*:\CC(S)\to\CC(S')$$
between the spaces of currents on the domain and target. Since we also have the homeomorphism \eqref{eq-homeo} between the space of currents and the space of flip and flow invariant measures on the unit tangent bundle, we get an induced homeomorphism
\begin{equation}\label{eq-homeo2}
  \phi_*:\CM_{\text{flip-flow}}(S)\to\CM_{\text{flip-flow}}(S').
\end{equation}
Our first goal is to figure out the image of $\FM^S$ under this map.

\begin{lem}\label{the lemma 2}
For every $f\in C^0(T^1S')$ we have
  $$\int_{T^1S'}f\, d\phi_*(\FM^{S})=\frac{6g-5}{6g-6}\cdot\FM_{\Thu}(\{\ell_S(\cdot)\le 1\})\cdot\int_{\{\ell_S(\phi^{-1}(\cdot))\le 1\}}\widehat f\, d\FM_{\Thu},$$
where the second integral is taken over the set
\begin{equation}\label{eq set}
  \{\ell_S(\phi^{-1}(\cdot))\le 1\}=\{\lambda\in\CM\CL(S')\vert\ell_S(\phi^{-1}(\lambda))\le 1\}
\end{equation}
of those measured laminations in $\CM\CL(S')$ which have length at most $1$ in the surface $S$.
\end{lem}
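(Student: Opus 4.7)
The plan is to unpack the definition of $\FM^S$ that was derived in the proof of Theorem \ref{main} and push it forward term by term under $\phi_*$, using two key ingredients: that the homeomorphism \eqref{eq-currents homeo} intertwines with the homeomorphism \eqref{eq-homeo2} at the level of flip and flow invariant measures, and that $\phi_*$ sends the Thurston measure on $\CM\CL(S)$ to the Thurston measure on $\CM\CL(S')$.

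First, from the end of the proof of Theorem \ref{main} one can read off that $\FM^S$ is, up to an explicit positive scalar depending only on the area of $\{\ell_S(\cdot)\le 1\}$ under $\FM_{\Thu}$, the superposition
$$\int_{T^1S} g\, d\FM = \int_{\{\ell_S(\cdot)\le 1\}}\widehat g(\widehat\lambda)\, d\FM_{\Thu}(\lambda)$$
for $g\in C^0(T^1S)$. Invoking the compatibility between \eqref{eq-currents homeo} and \eqref{eq-homeo}, namely $\phi_*(\overline\lambda)=\overline{\phi_*(\widehat\lambda)}$, and the fact that the pushforward of a superposition of measures is the superposition of the pushforwards, I would then push $\FM$ forward and obtain
$$\int_{T^1S'} f\, d\phi_*(\FM) = \int_{\{\ell_S(\cdot)\le 1\}}\widehat f(\phi_*(\widehat\lambda))\, d\FM_{\Thu}(\lambda)$$
for every $f\in C^0(T^1S')$, with $\widehat f$ now defined on $\CC(S')$.

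Next I would change variables by setting $\lambda'=\phi_*(\lambda)\in\CM\CL(S')$. The essential point that makes this substitution useful is that $\phi_*$ pushes $\FM_{\Thu}^S$ to $\FM_{\Thu}^{S'}$. This is because $\phi_*:\CC(S)\to\CC(S')$ is a continuous map sending the counting current $\widehat\gamma$ of a simple multicurve $\gamma$ to the counting current $\widehat{\phi(\gamma)}$ of a simple multicurve in $S'$, giving a bijection $\CM\CL_\BZ(S)\leftrightarrow\CM\CL_\BZ(S')$; applying $\phi_*$ to the defining scaling limit \eqref{eq:thurston measure} on $S$ then recovers the defining scaling limit on $S'$. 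Under this change of variables the integration domain $\{\ell_S(\cdot)\le 1\}\subset\CM\CL(S)$ becomes precisely the set \eqref{eq set}, and multiplying by the explicit scalar from the first step gives the formula in the statement.

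The only substantive step is the Thurston invariance of $\phi_*$; this is where the argument genuinely uses that $\phi_*$ is defined via the boundary extension of $\tilde\phi$ rather than by ``applying $\phi$ pointwise''. Everything else is bookkeeping: unwinding the normalization of $\FM^S$, using linearity (or a monotone class argument on indicator functions and a Fubini step) to commute pushforward past the superposition integral, and tracking how the domain $\{\ell_S(\cdot)\le 1\}$ transforms. Since $\phi_*$ is a linear homeomorphism on currents that preserves the dense subset of counting currents of simple multicurves, the Thurston invariance itself is essentially immediate from continuity of pushforward of Radon measures, so no serious obstacle should appear.
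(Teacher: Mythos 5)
Your proof is correct, but it takes a genuinely different route from the paper's. The paper goes back to the $L\to\infty$ limit that defined $\FM^S$: it pushes the defining sequence $\frac{1}{\Vert\overline m_L^{S,\gamma_0}\Vert}\overline m_L^{S,\gamma_0}$ forward term by term, using $\phi_*(\overline\gamma)=\overline{\phi(\gamma)}$, reruns the computation from the proof of Lemma~\ref{the lemma} on the target $S'$, and then reinvokes Theorem~\ref{measure theorem} (applied now to $\phi(\gamma_0)$ on $S'$) to retake the limit, relying on continuity of $\phi_*$ to identify the result with $\phi_*(\FM^S)$. You instead push the already-identified limit measure forward directly, reading off from the proof of Theorem~\ref{main} that $\FM^S$ is a scalar multiple of the superposition $\int_{\{\ell_S(\cdot)\le 1\}}\overline\lambda\,d\FM_{\Thu}(\lambda)$, and then changing variables $\lambda\mapsto\phi_*(\lambda)$. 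The extra ingredient your route needs---and which you correctly single out as the one substantive point---is the $\phi_*$-invariance of the Thurston measure, which you justify correctly via the bijection $\CM\CL_\BZ(S)\leftrightarrow\CM\CL_\BZ(S')$ and the scaling-limit definition \eqref{eq:thurston measure}. The paper avoids invoking that invariance by paying with a second application of Theorem~\ref{measure theorem}; your version is arguably cleaner once the invariance is in hand, while the paper's is more self-contained. Your remark that the commutation of $\phi_*$ with the superposition integral needs a monotone-class/Fubini justification is the right level of care.

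One caution: your computation will produce the constant $\frac{6g-5}{(6g-6)\cdot\FM_{\Thu}(\{\ell_S(\cdot)\le 1\})}$ in front of the integral, since this is the factor relating $\FM^S$ to $\FM$ at the end of the proof of Theorem~\ref{main}, whereas the displayed statement of the lemma has $\frac{6g-5}{6g-6}\cdot\FM_{\Thu}(\{\ell_S(\cdot)\le 1\})$, with that $\FM_{\Thu}$-factor in the numerator. Tracing the paper's own proof (compare the definition of $K$ in the proof of Lemma~\ref{the lemma} with the definition of $K(S)$ in the proof of Lemma~\ref{the lemma 2}), the numerator placement appears to be a typo, and your version is the one consistent with Theorem~\ref{main}; you should note this mismatch rather than silently assert that you recover ``the formula in the statement.''
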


\begin{proof}
We start by tracking what the map $\phi_*$ does to the flip and flow invariant measure $\overline\gamma$ on $T^1S$ associated to a free homotopy class $\gamma$:
  $$\xymatrix{
    \CM_{\text{flip-flow}}(S) \ar[r] & \CC(S) \ar[r] & \CC(S') \ar[r] &\CM_{\text{flip-flow}}(S')\\
    \overline\gamma\ar[r] & \widehat\gamma \ar[r] & \widehat{\phi(\gamma)} \ar[r] & \overline{\phi(\gamma)}}$$
  Now, linearity of \eqref{eq-homeo2} implies that
  $$\phi_*(m^{S,\gamma_0}_L)=\sum_{{\tiny\begin{array}{c} \gamma\sim\gamma_0\\ \ell_S(\gamma)\le L\end{array}}}\overline{\phi(\gamma)}$$
  where $\gamma_0$ is some arbitrary but otherwise fixed closed geodesic in $S$.

  Then, exactly the same computation as we used in the first part of the proof of Lemma \ref{the lemma} shows that for any continuous function $f\in C^0(T^1S')$ we have
$$\int_{T^1S'}f\, d\phi_*\left(\frac 1{\Vert\overline m_L^{S,\gamma_0}\Vert}\overline m_L^{S,\gamma_0}\right)=\frac L{\Vert\overline m_L^{S,\gamma_0}\Vert}\int_{\{\ell_S(\phi^{-1}(\cdot))\le 1\}}\widehat f\, d\left(\sum_{\gamma\sim\phi(\gamma_0)}\delta_{\frac 1L\widehat\gamma}\right)$$
where the integral is taken over the set \eqref{eq set}. Still as in the proof of Lemma \ref{the lemma} we get that
$$\int_{T^1S'}f\, d\phi_*\left(\frac 1{\Vert\overline m_L^{S,\gamma_0}\Vert}\overline m_L^{S,\gamma_0}\right)\approx\frac{K(S)}{C(\gamma_0)}\int_{\{\ell_S(\phi^{-1}(\cdot))\le 1\}}\widehat f\, d\left(\frac 1{L^{6g-6}}\sum_{\gamma\sim\phi(\gamma_0)}\delta_{\frac 1L\widehat\gamma}\right)$$
where
$$K(S)=\frac{6g-5}{6g-6}\cdot\FM_{\Thu}(\{\ell_S(\cdot)\le 1\}).$$
Once at this point we get still as in Lemma \ref{the lemma} that
$$\lim_{L\to\infty}\int_{T^1S'}f\, d\phi_*\left(\frac 1{\Vert\overline m_L^{S,\gamma_0}\Vert}\overline m_L^{S,\gamma_0}\right)=K(S)\cdot\int_{\{\ell_S(\phi^{-1}(\cdot))\le 1\}}\widehat f\, d\FM_{\Thu}.$$
Since $\FM^{S}$ arises by Theorem \ref{main} as the limit of the measures $\frac 1{\Vert\overline m_L^{S,\gamma_0}\Vert}\overline m_L^{S,\gamma_0}$ and since $\phi_*$ is continuous, we get
$$\int_{T^1S'}f\, d\phi_*(\FM^{S})=K(S)\cdot\int_{\{\ell_S(\phi^{-1}(\cdot))\le 1\}}\widehat f\, d\FM_{\Thu},$$
as we had claimed.
\end{proof}

We are now going to rewrite the integral on the right in Lemma \ref{the lemma 2} using polar coordinates in $\CM\CL(S')$. To that end we choose a compact set $X\subset\CC(S')$ such that for every $\alpha\in\CC(S')\setminus\{0\}$ there is a uniquely determined $t>0$ with $t\cdot\alpha\in X$. Let $\FN$ be the measure on $X$ such that the map
$$(\BR_{>0}\times X,t^{6g-7}dt\otimes\FN)\to(\CC(S')\setminus\{0\},\FM_{\Thu})$$
is a measure preserving homeomorphism. Note that this induces a homeomorphism between
$$\{(t,\lambda)\in\BR_{>0}\times X\text{ with }0<t\le \ell_S(\phi^{-1}(\lambda))^{-1}\}\to\{\ell_S(\cdot)\le 1\}$$
We have then for every $f\in C^0(T^1S')$ that
\begin{align*}
\int_{\{\ell_S(\phi^{-1}(\cdot))\}}\widehat f\, d\FM_{\Thu}
&=\int_{\{\ell_S(\phi^{-1}(\cdot))\}}\left(\int_{T^1S'}f\, d\overline\lambda\right) d\FM_{\Thu}(\lambda)\\
&=\int_X\int_0^{\frac 1{\ell_S(\phi^{-1}(\lambda))}}\left(\int_{T^1S'}f\, d(\overline{t\cdot\lambda})\right) t^{6g-7}\cdot dt\cdot d\FN(\lambda)\\
&=\int_X\int_0^{\frac 1{\ell_S(\phi^{-1}(\lambda))}}\left(\int_{T^1S'}f\, d\overline\lambda\right) t^{6g-6}\cdot dt\cdot d\FN(\lambda)\\
&=\int_X \frac 1{(6g-5)\cdot\ell_S(\phi^{-1}(\lambda))^{6g-5}}\left(\int_{T^1S'}f\, d\overline\lambda\right) d\FN(\lambda)
\end{align*}
In light of Lemma \ref{the lemma 2} we get:
$$\int_{T^1S'}f\, d\phi_*(\FM^S)=\frac{\FM_{\Thu}(\{\ell_S(\cdot)\le 1\})}{6g-6}\int_X \frac 1{\ell_S(\phi^{-1}(\lambda))^{6g-5}}\left(\int_{T^1S'}f\, d\overline\lambda\right) d\FN(\lambda)$$
Applying this computation to $S=S'$ and $\phi=\Id$ we also get
$$\int_{T^1S'}f\, d\FM^{S'}=\frac{\FM_{\Thu}(\{\ell_{S'}(\cdot)\le 1\})}{6g-6}\int_X \frac 1{\ell_{S'}(\lambda)^{6g-5}}\left(\int_{T^1S'}f\, d\overline\lambda\right) d\FN(\lambda)$$
Let us now denote by $\CU\CE\subset\CM\CL(S')$ be the set of uniquely ergodic laminations and consider the function
$$\xi^{\phi,S}_{S'}:T^1S'\to\BR$$
given by
$$\xi^{\phi,S}_{S'}(v)=\left\{\begin{array}{ll}
\left(\frac{\ell_{S'}(\lambda)}{\ell_S(\phi^{-1}(\lambda))}\right)^{6g-5} & \text{ if }v\text{ is tangent to }\lambda\in\CU\CE\\
0 & \text{ otherwise}
\end{array}\right.$$
Since the set of uniquely ergodic measured laminations has full Thurston measure $\FM_{\Thu}$ \cite{Masur} we get then that
$$\int_X \frac 1{\ell_S(\phi^{-1}(\lambda))^{6g-5}}\left(\int_{T^1S'}f\, d\overline\lambda\right) d\FN(\lambda)=\int_X \frac 1{\ell_{S'}(\lambda)^{6g-5}}\left(\int_{T^1S'}f\xi^{\phi,S}_{S'}\, d\overline\lambda\right) d\FN(\lambda).$$
Taking all of this together we get
$$\frac 1{\FM_{\Thu}(\{\ell_S(\cdot)\le 1\})}\cdot\int_{T^1S'}f\, d\phi_*(\FM^S)=\frac 1{\FM_{\Thu}(\{\ell_{S'}(\cdot)\le 1\})}\cdot\int_{T^1S'}f\xi^{\phi,S}_{S'}\, d\FM^{S'}.$$
Since this holds true for all continuous functions $f\in C^0(T^1S')$ we get that indeed $\phi_*(\FM^S)$ is absolutely continuous with respect to $\FM^{S'}$ with Radon-Nikodym derivative
\begin{equation}\label{eq rnd}
  \frac{d\phi_*(\FM^S)}{d\FM^{S'}}=\frac{\FM_{\Thu}(\{\ell_{S}(\cdot)\le 1\})}{\FM_{\Thu}(\{\ell_{S'}(\cdot)\le 1\})}\cdot\xi^{\phi,S}_{S'}.
\end{equation}
Since the Radon-Nikodym derivative is essentially positive on the support of $\FM^{S'}$ we get that indeed both measures are in the same measure class. We have proved Theorem \ref{theorem 3}:

\begin{named}{Theorem \ref{theorem 3}}
  If $\phi:S\to S'$ is a homeomorphism between closed negatively curved surfaces then the measures $\phi_*(\FM^S)$ and $\FM^{S'}$ are in the same measure class.\qed
\end{named}

All that is left to do is to prove Theorem \ref{theorem 4}, which we do in the next section.

\section{}\label{sec8}

Let us remind the reader what we need to prove:

\begin{named}{Theorem \ref{theorem 4}}
  A homeomorphism $\phi:S\to S'$ between closed orientable hyperbolic surfaces is isotopic to an isometry if and only if $\phi_*(\FM^S)=\FM^{S'}$.
\end{named}
\begin{proof}
One direction is clear: if $\phi$ is an isometry, then $\phi_*(\FM^S)=\FM^{S'}$. Suppose conversely that $S$ and $S'$ are hyperbolic surfaces and that $\phi_*(\FM^S)=\FM^{S'}$. This means that the Radon-Nikodym derivative \eqref{eq rnd} is identically $1$. This means that we have
\begin{equation}\label{eq bla1}
  \ell_S(\lambda)=\frac{\FM_{\Thu}(\{\ell_{S}(\cdot)\le 1\})}{\FM_{\Thu}(\{\ell_{S'}(\cdot)\le 1\})}\cdot\ell_{S'}(\phi(\lambda))
\end{equation}
for every uniquely ergodic $\lambda\in\CM\CL(S)$. Density of the set of uniquely ergodic laminations implies that \eqref{eq bla1} holds for all $\lambda\in\CM\CL(S)\setminus\{0\}$. The following claim implies that this cannot happen unless $\phi$ is homotopic, and hence isotopic, to an isometry. 

\begin{claim}
  If $\phi:S\to S'$ is not homotopic to an isometry then there are $\alpha,\beta\in\CM\CL(S)$ with $\ell_S(\alpha)<\ell_{S'}(\phi(\alpha))$ and $\ell_S(\beta)>\ell_{S'}(\phi(\beta))$.
\end{claim}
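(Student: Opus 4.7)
The plan is to prove the contrapositive: if at least one of the two asserted laminations fails to exist, then $\phi$ is homotopic to an isometry, contradicting the hypothesis. I would first handle the case in which no $\alpha$ exists, i.e.\ $\ell_{S'}(\phi(\lambda))\le\ell_S(\lambda)$ for every $\lambda\in\CM\CL(S)$. The key input here is Thurston's minimal stretch theorem, which asserts that for any homotopy equivalence between closed hyperbolic surfaces the best Lipschitz constant in the homotopy class equals the supremum of the stretch ratios:
\[
\inf_{\psi\sim\phi}\log\Lip(\psi)=\sup_{\lambda\in\CM\CL(S)\setminus\{0\}}\log\frac{\ell_{S'}(\phi(\lambda))}{\ell_S(\lambda)},
\]
and moreover the infimum is attained. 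Under the hypothesis the right-hand side is at most $0$, producing a $1$-Lipschitz representative $\phi'$ in the homotopy class of $\phi$.

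Next I would argue that $\phi'$ is in fact an isometry via a degree/Jacobian argument. By Gauss--Bonnet, $\Area(S)=\Area(S')=2\pi(2g-2)$. The Lipschitz bound, via Rademacher's theorem, forces $\lvert\Jac(\phi')\rvert\le 1$ almost everywhere, while $\phi'$ has degree $\pm 1$ as a homotopy equivalence of closed oriented surfaces. Combining these,
\[
\Area(S')=\left\lvert\int_S\Jac(\phi')\, dA\right\rvert\le\int_S\lvert\Jac(\phi')\rvert\, dA\le\Area(S)=\Area(S'),
\]
so equality holds throughout and $\lvert\Jac(\phi')\rvert=1$ a.e. Together with the $1$-Lipschitz condition this says that the differential of $\phi'$ is a linear isometry at almost every point; standard Riemannian arguments then promote $\phi'$ to a genuine isometry, so that $\phi$ is isotopic to an isometry.

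The second case, where no $\beta$ exists, gives $\ell_{S'}(\phi(\lambda))\ge\ell_S(\lambda)$ for every $\lambda\in\CM\CL(S)$, equivalently $\ell_S(\phi^{-1}(\mu))\le\ell_{S'}(\mu)$ for every $\mu\in\CM\CL(S')$; it reduces to the first case upon swapping the roles of $S$ and $S'$ and applying the argument to $\phi^{-1}:S'\to S$. The main obstacle will be the clean invocation of Thurston's minimal stretch theorem, a deep but classical result in Teichm\"uller theory; the remaining Jacobian/area step is routine.
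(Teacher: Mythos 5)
Your proof is correct and relies on the same key input as the paper, namely Thurston's theorem that the minimal Lipschitz constant over the homotopy class equals $\sup_{\lambda}\ell_{S'}(\phi(\lambda))/\ell_S(\lambda)$. The only difference is one step: the paper simply cites, also from Thurston, that $\Lip(\phi)>1$ unless $\phi$ is homotopic to an isometry, and reads off the existence of $\alpha$ directly (and of $\beta$ by applying the same to $\phi^{-1}$). You instead argue by contrapositive and re-derive the implication \lq\lq $\Lip(\phi)\le 1$ forces an isometry\rq\rq\ from scratch via the degree/Jacobian computation: a $1$-Lipschitz degree-$\pm 1$ map between hyperbolic surfaces of equal area must have a.e.\ unitary differential, hence be an isometry. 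This is a valid and slightly more self-contained variant, at the mild cost of needing to know that the infimum of Lipschitz constants is attained (or, alternatively, an Arzel\`a--Ascoli argument to produce a $1$-Lipschitz limit in the homotopy class) and that a Lipschitz map with a.e.\ isometric differential is a Riemannian isometry. Both routes buy the same conclusion; yours trades a citation for a short analytic argument.
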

\begin{proof}
  Let $\Lip(\phi)$ denote the infimum of the Lipschitz constants over all maps homotopic to $\phi$. It is due to Thurston \cite{Thurston} that $\Lip(\phi)>1$ unless $\phi$ is homotopic to an isometry and that
  $$\Lip(\phi)=\max_{\lambda\in\CM\CL(S)\setminus\{0\}}\frac{\ell_{S'}(\phi(\lambda))}{\ell_S(\lambda)}.$$
The existence of $\alpha$ follows. The existence of $\beta$ follows as well once we repeat the argument replacing $\phi$ by $\phi^{-1}$. 
\end{proof}

Having proved the claim, we have proved that if $S$ and $S'$ are hyperbolic and if $\phi(\FM^S)=\FM^{S'}$ then $\phi$ is homotopic to an isometry. 
\end{proof}

Let us now construct an example showing that Theorem \ref{theorem 4} fails if say $S'$ is allowed to have variable curvature, even after we normalize the area to be equal to that of $S$. Let us start by choosing $S$ hyperbolic and recall that, by Theorem \ref{properties of measure}, the support $\CK\subset T^1S$ of $\FM^S$ has Hausdorff dimension $1$. Its projection $\CK'\subset S$ to the surface is closed and also has Hausdorff dimension $1$. Denoting by $\sigma_0$ the hyperbolic metric on $S$ let $\sigma$ be another metric (with same area) obtained by very slightly perturbating $\sigma_0$ on some open set whose closure does not meet $\CK'$. Let $S'=(S,\sigma)$ and $\phi=\Id$. The map
$$\phi:(S,\sigma_0)\to(S,\sigma)$$
is then an isometry on the set $\CK'$ and hence satisfies that $\ell_{S'}(\phi(\lambda))=\ell_S(\lambda)$ for all $\lambda\in\CM\CL(S)$. It follows that the Radon-Nikodym derivative \eqref{eq rnd} of $\phi_*(\FM^S)$ with respect to $\FM^{S'}$ is identically one, meaning that $\phi_*(\FM^S)=\FM^{S'}$. On the other hand there is no isometry from a hyperbolic surface to one with non-constant curvature.

\section{}\label{sec9}

So far we have been considering closed surfaces. We explain briefly how to modify the proofs of the theorems in the introduction so that they apply to surfaces with cusps. Actually, we will just focus on Theorem \ref{main} leaving to the reader the pleasure of modifying the others.

Let us thus suppose that $S$ is a complete hyperbolic surface of finite area and let $\gamma_0$ be a closed primitive geodesic in $S$. We also suppose that $S$ is not a thrice punctured sphere---all results are trivial in that case.

The first basic fact (see for example \cite[Lemma 2.8]{book}) we need is that there is a compact subsurface $S_0\subset S$ which contains every closed geodesic $\gamma$ with at most as many self-intersections as $\gamma_0$:
$$\iota(\gamma,\gamma)\le\iota(\gamma_0,\gamma_0).$$
Since two curves of the same type have the same self-intersection number we get that all geodesics of the same type as $\gamma_0$ are contained in $S_0$. It follows that the measures $m_L^{S,\gamma_0}$ and $\overline m_L^{S,\gamma_0}$ are supported by $S_0$ for all $L$.

This means that in the proofs one does not need to worry about the measure wandering off to infinity. But this is only one of the issues one faces when the surface is non-compact. The point is that for technical reasons, when working with currents (and wanting to have a continuous and homogenous extension of $\iota(\cdot,\cdot)$ to the space of currents) one is forced by nature to work on compact surfaces. However, while the surfaces have to be compact, they can have boundary. 

Let then $S'\subset S$ be a compact subsurface such that
\begin{itemize}
\item $S_0\subset S'\setminus\D S'$, and
\item $S\setminus S'$ is a union of annuluar components, one for each cusps of $S$.
\end{itemize}
Endow $S'$ with a negatively curved metric with totally geodesic boundary and which agrees with the metric of $S$ near $S_0$.

By definition, a current on $S'$ is a $\pi_1(S')$-invariant Radon measure on the space $\CG(\tilde S')$ of complete geodesics on the universal cover of $S'$. Denote by $\CC_{S_0}(S')$ be the set of currents on $S'$ supported by geodesics which project into $S_0$. For example, the current $\widehat\gamma$ belongs to $\CC_{S_0}(S')$ for every $\gamma$ of the same type as $\gamma_0$.

Note also that $S'$-geodesics contained in $S_0$ are, by the choice of the metric on $S$, also $S$-geodesics. It follows that we can consider currents $\widehat\mu\in\CC_{S_0}(S')$ as $\pi_1(S)$-invariant measures on the set of geodesics on $S$. We thus get a continuous map
\begin{equation}\label{sviedyqd}
  \CC_{S_0}(S')\to\CM_{\text{flip-flow}}(S),\ \ \widehat\mu\mapsto\overline\mu=dt\otimes\widehat\mu
  \end{equation}
where the measure $\overline\mu$ is still given by \eqref{eq-product}. This maps is also proper.

Now, Theorem \ref{measure theorem} is proved in \cite{book} not only for closed but also for general compact connected orientable surfaces of negative Euler characteristic (other than the pair of pants). Applying it to $\gamma_0$ when seen as a curve in $S'$ we get that
\begin{equation}\label{eq measure theorem 2}
  \lim_{L\to\infty}\frac 1{L^{6g-6+2r}}\sum_{\gamma\sim\gamma_0}\delta_{\frac 1L\widehat\gamma}=C(\gamma_0)\cdot\FM_{\Thu}.
  \end{equation}
Here $g$ and $r$ are the genus and number of cusps of $S$ and the convergence takes place with respect to the weak-*-topology on the space of Radon measures on $\CC_{S_0}(S')$. 

Armed with \eqref{sviedyqd} and \eqref{eq measure theorem 2} we can repeat word by word the proof of Lemma \ref{lem:total measure} and Lemma \ref{the lemma}---Theorem \ref{main} follows.
\medskip

  The reader might be wondering if this is actually true. Earlier we were making a big deal of the fact that the space of currents is homeomorphic to that of flip and flow invariant measures, and the map \eqref{sviedyqd} is obviously not a homeomorphism. If the reader is worried about this, then they can really go back to the proof of Lemma \ref{lem:total measure} and Lemma \ref{the lemma} and check that we are only using the continuity and properness of \eqref{eq-homeo}.

\end{document}